\numberwithin{equation}{section}
\newcommand{\CARD}{{\rm CARD}}
\newcommand{\REG}{{\rm REG}}
\newcommand{\GCH}{{\rm GCH}}
\newcommand{\ZFC}{{\rm ZFC}}
\newcommand{\ORD}{\mathop{{\rm ORD}}}
\renewcommand{\P}{{\mathbb P}}
\newcommand{\Q}{{\mathbb Q}}
\newcommand{\R}{{\mathbb R}}
\newcommand{\Sacks}{\mathop{\rm Sacks}}
\newcommand{\Add}{\mathop{\rm Add}}
\newcommand{\forces}{\Vdash}
\newcommand{\forced}{\Vdash}
\renewcommand{\1}{\mathbbm{1}}
\newcommand{\restrict}{\upharpoonright}
\newcommand{\concat}{\mathbin{{}^\smallfrown}}
\newcommand{\supp}{\mathop{\rm supp}}
\newcommand{\Aut}{\mathop{\rm Aut}}
\newcommand{\dom}{\mathop{\rm dom}}
\newcommand{\ran}{\mathop{\rm ran}}
\newcommand{\cf}{\mathop{\rm cf}}
\renewcommand{\and}{\mathop{\&}}
\newcommand{\length}{\mathop{\rm length}}
\newcommand{\Split}{\mathop{\rm Split}}
\newtheorem{theorem}{Theorem}
\newtheorem{lemma}[theorem]{Lemma}
\newtheorem{corollary}[theorem]{Corollary}
\newtheorem{sublemma}{Sublemma}[theorem]
\newtheorem*{theorem31}{Theorem 3.1}
\theoremstyle{definition}
\newtheorem{question}{Question}
\newtheorem{fact}[theorem]{Fact}
\newtheorem{definition}[theorem]{Definition}
\thanks{This work was carried out while the author was a student under the advisement of Joel David Hamkins. The author wishes to thank Professor Hamkins for his guidance, as well as for many helpful conversations regarding the topics contained in this paper. The author also wishes to thank Arthur Apter for suggesting this course of research as well as for his helpful comments regarding Lemma \ref{lemmawoodinclosed} below.}
\subjclass[2000]{03E35, 03E55}
\keywords{Woodin cardinal, continuum function, Easton's Theorem}
\date{\today}
\begin{document}

\title{Easton's Theorem in the presence of Woodin cardinals}

\author[Brent Cody]{Brent Cody}
\address[Brent Cody]{ 
The Fields Institute for Research in Mathematical Sciences, 
222 College Street, 
Toronto, Ontario M5S 2N2,
Canada} 
\email[B. ~Cody]{bcody@fields.utoronto.ca} 
\urladdr{http://www.fields.utoronto.ca/~bcody/}

\maketitle

\begin{abstract}
Under the assumption that $\delta$ is a Woodin cardinal and $\GCH$ holds, I show that if $F$ is any class function from the regular cardinals to the cardinals such that (1) $\kappa<\cf(F(\kappa))$, (2) $\kappa<\lambda$ implies $F(\kappa)\leq F(\lambda)$, and (3) $\delta$ is closed under $F$, then there is a cofinality-preserving forcing extension in which $2^\gamma= F(\gamma)$ for each regular cardinal $\gamma<\delta$, and in which $\delta$ remains Woodin. Unlike the analogous results for supercompact cardinals \cite{Menas:ConsistencyResultsConcerningSupercompactness} and strong cardinals \cite{FriedmanHonzik:EastonsTheoremAndLargeCardinals}, there is no requirement that the function $F$ be locally definable.

\end{abstract}

\section{Introduction}\label{sectionintroduction}

Easton \cite{Easton:PowersOfRegularCardinals} proved that the continuum function $\kappa\mapsto 2^\kappa$ on regular cardinals can be forced to behave in any way that is consistent with K\"onig's Theorem ($\kappa<\cf(2^\kappa)$) and monotonicity ($\kappa<\lambda$ implies $2^\kappa\leq 2^\lambda$). I will say that $F$ is an \emph{Easton function} if $F$ is a function from the class of regular cardinals to the class of cardinals satisfying (1) $\kappa<\cf(F(\kappa))$ and (2) $\kappa<\lambda$ implies $F(\kappa)\leq F(\lambda)$. In the presence of large cardinals, there are additional restrictions on the possible behaviors of the continuum function on regular cardinals. For example, Scott proved that if $\GCH$ fails at a measurable cardinal $\kappa$, then $\GCH$ fails on a normal measure one subset of $\kappa$. It seems natural to ask: 
\begin{question}\label{question}
Given a large cardinal $\kappa$, what Easton functions can be forced to equal the continuum function on the regular cardinals, while preserving the large cardinal property of $\kappa$? 
\end{question}

Menas \cite{Menas:ConsistencyResultsConcerningSupercompactness} showed that if $F$ is a ``locally definable'' Easton function (for a definition see \cite[Theorem 18]{Menas:ConsistencyResultsConcerningSupercompactness} or \cite[Definition 3.16]{FriedmanHonzik:EastonsTheoremAndLargeCardinals}), then there is a forcing extension $V[G]$ in which $2^\gamma=F(\gamma)$ for each regular cardinal $\gamma$ and each supercompact cardinal in $V$ remains supercompact in $V[G]$. In Menas' proof, the local definability of $F$ is needed to show that for an elementary embedding $j:V\to M$ witnessing the $\lambda$-supercompactness of $\kappa$, the functions $F$ and $j(F)$ agree to an extent allowing one to lift $j$ to $V[G]$. The developments in the literature addressing Question \ref{question} in the case where $\kappa$ is a measurable cardinal are more complicated. Woodin showed, using his method of modifying a generic filter, that if there is an elementary embedding $j:V\to M$ with critical point $\kappa$ such that $j(\kappa)>\kappa^{++}$ and $M^\kappa\subseteq M$ then there is a forcing extension in which $\kappa$ is measurable and $\GCH$ fails at $\kappa$ (see \cite[Theorem 25.1]{Cummings:Handbook} or \cite[Theorem 36.2]{Jech:Book}). In \cite{FriedmanThompson:PerfectTreesAndElementaryEmbeddings}, Friedman and Thompson introduced the tuning fork method and argued that it provides a more streamlined proof of Woodin's result. Friedman and Honzik \cite{FriedmanHonzik:EastonsTheoremAndLargeCardinals} made use of the uniformity of the tuning fork method and provided an answer to Question \ref{question} for measurable cardinals as well as for strong cardinals. Specifically, regarding strong cardinals, they proved that if $F$ is any locally definable Easton function and $\GCH$ holds, then there is a cofinality preserving forcing extension $V[G]$ in which $2^\gamma=F(\gamma)$ for each regular cardinal $\gamma$ and each strong cardinal in $V$ remains strong in $V[G]$.

In this paper I prove the following theorem, which provides a complete answer to Question \ref{question} for the case of Woodin cardinals (see Section \ref{sectionwoodin} below for a definition and general discussion of Woodin cardinals). 



\begin{theorem}\label{theoremwoodin}
Suppose $\GCH$ holds, $F:\REG\to\CARD$ is an Easton function, and $\delta$ is a Woodin cardinal closed under $F$. Then there is a cofinality-preserving forcing extension in which $\delta$ remains Woodin and $2^\gamma=F(\gamma)$ for each regular cardinal $\gamma$.
\end{theorem}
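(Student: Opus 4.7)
The forcing I would use is a reverse-Easton (Easton-support) iteration $\mathbb{P}$ of length $\ORD$ whose stage-$\gamma$ factor, at each regular $\gamma$ in the intermediate extension, is $\Add(\gamma, F(\gamma))$. Factor $\mathbb{P} = \mathbb{P}_\delta \ast \dot{\mathbb{Q}}$, where $\mathbb{P}_\delta$ collects the stages below $\delta$ and $\dot{\mathbb{Q}}$ is the tail. Because $\delta$ is closed under $F$, $\mathbb{P}_\delta$ lives in $V_\delta$, is $\delta$-cc, and has size $\delta$, while $\dot{\mathbb{Q}}$ is forced to be $\delta$-closed. The standard Easton analysis (counting nice names, using closure and chain condition at each threshold) will then show that $\mathbb{P}$ preserves cofinalities and forces $2^\gamma = F(\gamma)$ for every regular $\gamma$.

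The substantive content is the preservation of the Woodinness of $\delta$. Let $G \ast H$ be generic for $\mathbb{P}_\delta \ast \dot{\mathbb{Q}}$, and let $f : \delta \to \delta$ in $V[G\ast H]$ be given. Taking a nice $\mathbb{P}$-name $\dot f$ for $f$ and using the $\delta$-cc of $\mathbb{P}_\delta$ together with the distributivity properties of $\mathbb{Q}$, I would extract a ground-model function $f^\ast \in V$ with $f(\xi) \leq f^\ast(\xi)$ for every $\xi < \delta$. Applying the Woodinness of $\delta$ in $V$ to a function simultaneously dominating $f^\ast$ and $F$, I would obtain a critical point $\kappa < \delta$ closed under both $f$ and $F$, together with a $(\kappa, \lambda)$-extender $E \in V$ whose associated $j_E : V \to M$ satisfies $\crit(j_E) = \kappa$, $V_\lambda \subseteq M$, and $\lambda$ is as large as needed to witness the required partial strongness for $f$.

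The main obstacle — and the place where the Woodin case diverges from the supercompact and strong cases — is lifting $j_E$ to an embedding $\bar\j : V[G\ast H] \to M[G^\ast \ast H^\ast]$. Because $\kappa$ is a closure point of $F$, the two iterations agree through stage $\kappa$, i.e.\ $\mathbb{P}_\kappa = j_E(\mathbb{P}_\delta)\restrict \kappa$, and the first $\kappa$ stages of $G$ give a generic for this common initial segment. Above $\kappa$, however, $j_E(F)$ need not agree with $F$, so the $M$-iteration is genuinely different from the $V$-iteration; without Menas' local-definability hypothesis there is no reason for the $V$-generic to restrict to an $M$-generic. The flexibility in the definition of Woodinness — that $\kappa$ and $E$ may be chosen \emph{after} seeing $f$ — circumvents this difficulty: I do not need $G^\ast$ to extend $G$ in any canonical way, only to contain $j_E[G]$ pointwise. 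I would construct such a $G^\ast$ inside $V[G \ast H]$ by a tuning-fork/master-condition argument in the style of \cite{FriedmanThompson:PerfectTreesAndElementaryEmbeddings}, exploiting the high closure of the tail of $j_E(\mathbb{P}_\delta)$ in $M[G_\kappa]$ and the $\lambda$-closure of $M$ to find upper bounds for the $j_E$-image of each relevant chain. The derived-extender construction applied to the lift $\bar\j$ then produces a length-$\lambda$ extender in $V[G \ast H]$ witnessing the required partial strongness of $\kappa$; as $f$ was arbitrary, $\delta$ is Woodin in $V[G \ast H]$.
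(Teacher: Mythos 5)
Your sketch has the right general shape (reverse-Easton iteration, lift an extender embedding, argue along the lines of Friedman--Thompson), but it has two substantive gaps, both of which the paper addresses with specific devices you have omitted.

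First, your iteration forces only with $\Add(\gamma,F(\gamma))$ at each regular $\gamma$, yet you then appeal to ``a tuning-fork/master-condition argument in the style of \cite{FriedmanThompson:PerfectTreesAndElementaryEmbeddings}'' to lift through the stage-$\kappa$ forcing. The tuning-fork method is a device \emph{specific to Sacks forcing}: it relies on the branches $t_\alpha=\bigcap\{j(p)(\alpha)\mid p\in H_\kappa\}$ through $2^{<j(\kappa)}$, which exist precisely because the stage-$\kappa$ factor is $\Sacks(\kappa,F(\kappa))$. The whole reason \cite{FriedmanHonzik:EastonsTheoremAndLargeCardinals} (and this paper) replace $\Add(\kappa,F(\kappa))$ with $\Sacks(\kappa,F(\kappa))$ at inaccessible closure points of $F$ is that when $F(\kappa)>\kappa^+$ the naive master-condition argument for Cohen forcing fails: the natural candidate $\bigcup j"H_\kappa$ has size $F(\kappa)>\kappa$ and is built from $V[G_\kappa]$-data, so it need not lie in $M[j(G_\kappa)]$ and need not even be a condition there. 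Your proposal as written does not have Sacks forcing, so there is nothing for a tuning-fork argument to act on; this is the step where the Woodin preservation would actually break.

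Second, you correctly identify that ``above $\kappa$, $j_E(F)$ need not agree with $F$,'' but the resolution you offer --- that $G^\ast$ need not restrict to $G$ --- does not supply a method for building $G^\ast$ at all. You still need an $M$-generic for the $M$-iteration on $(\kappa,\lambda)$, and if $j_E(F)$ and $F$ disagree on that interval, the $M$-forcing is genuinely different from the $V$-forcing and no generic is available. The paper's key move is to \emph{not} use the raw Woodin definition (functions $f:\delta\to\delta$ and $V_{j(f)(\kappa)}\subseteq M$) but the $A$-strong characterization of Lemma \ref{lemmawoodin}: for every $A\subseteq\delta$ there is a $\kappa<\delta$ that is ${<}\delta$-strong \emph{for $A$}, meaning $j(A)\cap\gamma=A\cap\gamma$. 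By coding the name $\dot A$, the Easton function $F\restrict\delta$, and an auxiliary bookkeeping function $u$ into a single $A$, the paper obtains an embedding with $j(F)\restrict\theta=F\restrict\theta$ (and similarly for $\dot A$ and $u$), which is exactly what forces $j(\P_\kappa)$ to agree with $\P_\delta$ up to $\gamma_0$, and after that to be close enough that an automorphic image of $G_{[\gamma_0,\theta)}$ is $M[G_{\gamma_0}]$-generic (Lemmas \ref{lemmapinfty} and \ref{lemmaaut}). This is precisely the mechanism that replaces Menas' local-definability hypothesis; there is no way to get it from the raw Woodin definition, and your sketch does not supply a substitute. A third, more technical issue: the closure of $M$ you invoke must be combined with choosing the degree of strength $\theta$ to have $M$-cofinality $\kappa^+$ (so that no forcing occurs at stage $\theta$ and the tail is $\leq\theta$-closed in $M$); without this choice the ``high closure of the tail'' you rely on is not available.
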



The proof of Theorem \ref{theoremwoodin} adapts the methods of \cite{FriedmanHonzik:EastonsTheoremAndLargeCardinals} and \cite{FriedmanThompson:PerfectTreesAndElementaryEmbeddings} to a new case. Notice that in Theorem \ref{theoremwoodin}, there is no requirement stating that $F$ must be \emph{locally definable} as in the results of \cite{Menas:ConsistencyResultsConcerningSupercompactness} and \cite{FriedmanHonzik:EastonsTheoremAndLargeCardinals}. It is the property $j(A)\cap\gamma = A\cap\gamma$ in one of the characterizations of Woodin cardinals (see Lemma \ref{lemmawoodin}) that allows the removal of this additional requirement on $F$. Since a straight forward argument shows that ${<}\delta$-closed forcing preserves the Woodinness of $\delta$ (see Lemma \ref{lemmawoodinclosed} below), the bulk of the work in proving Theorem \ref{theoremwoodin} will be to show that the continuum function can be forced to agree with $F$ below $\delta$, while preserving the Woodinness of $\delta$.

Let me remark here that as a corollary to the proof of Theorem \ref{theoremwoodin}, one has the following.

\begin{corollary}
Suppose $C$ is a class of Woodin cardinals and $F$ is an Easton function such that $\delta$ is closed under $F$ for each $\delta\in C$. Then there is a cofinality-preserving forcing extension in which $\delta$ remains Woodin and $2^\gamma=F(\gamma)$ for each regular cardinal $\gamma$.
\end{corollary}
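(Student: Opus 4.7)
The plan is to derive the corollary directly from the proof of Theorem \ref{theoremwoodin} by replacing the single-Woodin forcing with a class-length Easton-support iteration that realizes $F$ globally. First I would define $\P$ as the Easton-support class iteration indexed by the regular cardinals in which the forcing at stage $\gamma$ is exactly the local piece used in the proof of Theorem \ref{theoremwoodin} to arrange $2^\gamma=F(\gamma)$ (the tuning-fork variant of $\Add(\gamma,F(\gamma))$ designed so that the relevant Woodin embeddings can be lifted). Treating $F$ as a class parameter, this iteration is definable in $V$, and the standard Easton-support bookkeeping shows that $\P$ preserves $\ZFC$.

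Next, fix an arbitrary $\delta\in C$ and factor $\P$ as $\P_\delta * \dot{\P}^{\geq\delta}$, where $\P_\delta$ is the iteration restricted to stages below $\delta$. Because $\delta$ is closed under $F$, every stage $\gamma<\delta$ has $F(\gamma)<\delta$, so $\P_\delta$ has size $\delta$ and is $\delta$-c.c. The crucial point is that $\P_\delta$ is, by construction, precisely the forcing that the proof of Theorem \ref{theoremwoodin} attaches to the single Woodin cardinal $\delta$; hence by that proof $\P_\delta$ preserves the Woodinness of $\delta$ and forces $2^\gamma=F(\gamma)$ for every regular $\gamma<\delta$.

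For the tail, $\dot{\P}^{\geq\delta}$ begins with the stage-$\delta$ forcing, which is $\delta$-closed, and every subsequent stage has still greater closure; hence $\P_\delta$ forces $\dot{\P}^{\geq\delta}$ to be $\delta$-closed. By Lemma \ref{lemmawoodinclosed}, $\delta$-closed forcing preserves the Woodinness of $\delta$, so $\delta$ remains Woodin in $V^{\P}$. Since $\delta\in C$ was arbitrary, every member of $C$ is Woodin in the extension, and bookkeeping guarantees $2^\gamma=F(\gamma)$ at every regular $\gamma$. Cofinality preservation at a given regular $\gamma$ follows from the same factorization at any $\delta\in C$ with $\delta>\gamma$ (or, if $\gamma>\sup C$, from the ordinary Easton argument applied to the portion of $\P$ above $\sup C$).

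The main obstacle I expect is verifying that the factor $\P_\delta$ really coincides with the forcing produced by the proof of Theorem \ref{theoremwoodin} for $\delta$; this relies on the construction in that proof being carried out uniformly from $F$ and $\delta$ below $\delta$, which is why the statement is advertised as a corollary to the \emph{proof} rather than to the theorem itself. A secondary, more routine obstacle is the standard care required to run a proper-class Easton iteration---establishing definability in $V$, preservation of $\ZFC$, and the chain-condition/closure factorization at each regular cardinal---but all of these are handled by the usual arguments.
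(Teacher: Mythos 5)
Your proposal is correct and matches the argument the paper has in mind: the class forcing $\P$ from the proof of Theorem \ref{theoremwoodin} is defined uniformly from $F$ (with no reference to any particular Woodin cardinal), so for each $\delta\in C$ one factors $\P\cong\P_\delta*\dot\P_{[\delta,\infty)}$, gets preservation of Woodinness of $\delta$ through $\P_\delta$ by the body of the proof, and gets preservation through the ${<}\delta$-closed tail $\dot\P_{[\delta,\infty)}$ by Lemma \ref{lemmawoodinclosed}. One small descriptive slip: the stage-$\gamma$ forcing is not a ``tuning-fork variant of $\Add(\gamma,F(\gamma))$'' but rather the Easton product $\Sacks(\eta,F(\eta))\times\prod_{\gamma\in(\eta,\bar\eta)}\Add(\gamma,F(\gamma))$ at inaccessible closure points $\eta$ of $F$ and plain Cohen products at singular closure points (the tuning-fork method is a technique in the lifting argument, not a forcing notion); this does not affect the structure of your argument.
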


\section{Preliminaries for the Proof of Theorem \ref{theoremwoodin}}\label{sectionpreliminaries}

\subsection{Lifting Embeddings}

In what follows, I will be concerned with arguing that the Woodinness of a cardinal is preserved through forcing. This property is witnessed by elementary embeddings $j:M\to N$ between models of set theory. To show that such a large cardinal property is preserved to a forcing extension, say $V[G]$, one typically lifts the embedding to $j^*:M[G]\to N[j(G)]$ and argues that the lifted embedding witnesses the large cardinal property in $V[G]$. In this section, I will present some standard lemmas that are useful for lifting embeddings. For proofs of Lemmas \ref{lemmaground} - \ref{lemmalambdadist}, one may consult \cite{Cummings:Handbook} or \cite{Cummings:AModelInWhichGCH}.


In what follows $N$ and $M$ are always assumed to be transitive models of $\ZFC$. The following two lemmas are useful for building generic objects.

\begin{lemma}\label{lemmaground}
Suppose that $M^\lambda\subseteq M$ in $V$ and there is in $V$ an $M$-generic filter $H\subseteq \Q$ for some forcing $\Q\in M$. Then $M[H]^\lambda\subseteq M[H]$ in $V$.
\end{lemma}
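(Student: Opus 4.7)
The plan is to prove this by a standard name-chasing argument, exploiting the hypothesis $M^\lambda \subseteq M$ exactly once to move a sequence of $\Q$-names from $V$ into $M$. Fix a sequence $\vec{x} = \langle x_\alpha : \alpha < \lambda\rangle \in V$ with each $x_\alpha \in M[H]$; the goal is to exhibit $\vec{x}$ as the evaluation by $H$ of a single $\Q$-name lying in $M$.

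First, working in $V$ and using the axiom of choice there, I would pick for each $\alpha < \lambda$ some $\Q$-name $\dot{x}_\alpha \in M$ such that $\dot{x}_\alpha^H = x_\alpha$; this is possible since each $x_\alpha \in M[H]$, and $V$ sees enough of $M$ to make a simultaneous choice. The resulting sequence $\langle \dot{x}_\alpha : \alpha < \lambda\rangle$ is a $\lambda$-sequence of elements of $M$ lying in $V$, so by the closure hypothesis $M^\lambda \subseteq M$ it is actually an element of $M$.

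Next, working inside $M$, I would assemble these names into a single $\Q$-name for the sequence $\vec{x}$. A convenient choice is
\[
\dot{s} = \bigl\{\bigl(\mathrm{op}(\check{\alpha}, \dot{x}_\alpha),\, \1\bigr) : \alpha < \lambda \bigr\} \in M,
\]
where $\mathrm{op}(\cdot,\cdot)$ denotes the standard canonical name for an ordered pair. Evaluating by $H$ gives $\dot{s}^H = \{(\alpha, x_\alpha) : \alpha < \lambda\} = \vec{x}$, so $\vec{x} \in M[H]$ as required.

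There is no genuine obstacle here; the only subtlety worth flagging is the division of labor between $V$ and $M$. The choice of names uses $\AC$ in $V$, not in $M$, and produces an object of $V$, namely a sequence of elements of $M$. It is precisely the hypothesis $M^\lambda \subseteq M$ that absorbs this sequence into $M$, after which everything subsequent is a routine construction within $M$ and a single evaluation by $H$.
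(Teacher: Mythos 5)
Your proof is correct and is essentially the standard argument: choose $\Q$-names in $M$ for each coordinate using $\AC$ in $V$, absorb the resulting $\lambda$-sequence of names into $M$ via the closure hypothesis, assemble a single name inside $M$, and evaluate. The paper itself does not reprove this lemma but cites Cummings, whose proof is the same name-chasing argument you gave.
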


\begin{lemma}\label{lemmachain}
Suppose that $M\subseteq V$ is a model of $\ZFC$, $M^{<\lambda}\subseteq M$ in $V$ and $\P$ is $\lambda$-c.c. If $G\subseteq \P$ is $V$-generic, then $M[G]^{<\lambda}\subseteq M[G]$ in $V[G]$.
\end{lemma}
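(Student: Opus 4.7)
The plan is direct: for any $\vec{x} = \la x_\alpha : \alpha < \beta\ra \in V[G]$ with $\beta < \lambda$ and each $x_\alpha \in M[G]$, I aim to produce a $\P$-name for $\vec{x}$ that lies in $M$, which immediately gives $\vec{x} \in M[G]$.

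First, working in $V$, use the Axiom of Choice to pick, for each $\alpha < \beta$, a $\P$-name $\dot{x}_\alpha \in M$ with $(\dot{x}_\alpha)_G = x_\alpha$; at least one such name exists by the very definition of $M[G]$. The resulting function $\alpha \mapsto \dot{x}_\alpha$ lives in $V$, has domain $\beta < \lambda$, and takes values in $M$, so the closure hypothesis $M^{<\lambda} \subseteq M$ (in $V$) places this entire sequence of names inside $M$.

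Next, working inside $M$, assemble a single $\P$-name $\dot{s}$ for $\vec{x}$ by the standard construction for sequences: for each $\alpha < \beta$ include in $\dot{s}$ the pair $(\tau_\alpha, \1)$, where $\tau_\alpha$ is the canonical $\P$-name, built from $\check{\alpha}$ and $\dot{x}_\alpha$, whose interpretation is the ordered pair $(\alpha, x_\alpha)$. Since $\dot{s}$ is definable from a sequence already lying in $M$, one has $\dot{s} \in M$, and the computation $(\dot{s})_G = \vec{x}$ completes the argument.

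The only conceptually nontrivial step is the transfer of the sequence of chosen names from $V$ into $M$, and this is handled precisely by the closure assumption; I expect this to be the main obstacle conceptually, though it dissolves immediately once one invokes $M^{<\lambda} \subseteq M$. I note that the $\lambda$-c.c.\ hypothesis on $\P$ does not actually enter this argument for $<\lambda$-sequences; it would become essential only if one wished to push the conclusion up to sequences of length $\lambda$, where a $\lambda$-c.c.\ bookkeeping of antichains lets one bound the size of names and realize the whole sequence inside $M$.
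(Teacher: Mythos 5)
Your argument breaks down at the very first step, and the breakdown is exactly what the $\lambda$-c.c.\ hypothesis is there to repair. You say: ``working in $V$, use the Axiom of Choice to pick, for each $\alpha<\beta$, a $\P$-name $\dot{x}_\alpha\in M$ with $(\dot{x}_\alpha)_G=x_\alpha$.'' But the correctness condition $(\dot{x}_\alpha)_G=x_\alpha$ refers both to $G$ and to the sequence $\vec{x}$, neither of which lives in $V$; the only place you can verify that a candidate name evaluates to $x_\alpha$ is $V[G]$. Consequently the selection function $\alpha\mapsto\dot{x}_\alpha$ is an object of $V[G]$, not of $V$, and the closure hypothesis $M^{<\lambda}\subseteq M$ --- which you have only in $V$ --- simply does not apply to it. If you try to patch this by working in $V[G]$, you hit a circularity: what you would need is $M[G]^{<\lambda}\subseteq M[G]$ in $V[G]$, which is precisely the conclusion. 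So the step you flag as the ``only conceptually nontrivial'' one (shipping a $V$-sequence of $M$-names into $M$) is actually the easy part; the real difficulty is that no such $V$-sequence exists to ship.

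Your closing remark --- that the $\lambda$-c.c.\ ``does not actually enter this argument'' --- is backwards: it is the hypothesis that makes the construction possible in $V$ at all. The correct proof fixes a name $\dot{\vec{x}}\in V$ and a condition $p\in G$ forcing $\dot{\vec{x}}$ to be a $\beta$-sequence with entries in $M[\dot{G}]$. For each $\alpha<\beta$ one finds, \emph{in $V$}, a maximal antichain $A_\alpha$ below $p$ together with names $\tau_{\alpha,q}\in M$ $(q\in A_\alpha)$ such that $q\forces\dot{\vec{x}}(\check\alpha)=\tau_{\alpha,q}$. It is the $\lambda$-c.c.\ that gives $|A_\alpha|<\lambda$; then the matrix $\langle(A_\alpha,\langle\tau_{\alpha,q}\rangle_{q\in A_\alpha})\mid\alpha<\beta\rangle$ is a $<\lambda$-sized object in $V$ whose entries lie in $M$ (using $\P\in M$), so $M^{<\lambda}\subseteq M$ puts it into $M$, and mixing over the antichains produces a single name $\dot{s}\in M$ with $\dot{s}_G=\vec{x}$. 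Without the chain condition this bookkeeping fails and the lemma is false in general, even for $<\lambda$-sequences.
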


Suppose $j:M\to N$ is an embedding and $\P\in M$ a forcing notion. In order to lift $j$ to $M[G]$ where $G$ is $M$-generic for $\P$, one typically uses Lemmas \ref{lemmaground} and \ref{lemmachain} to build an $N$-generic filter $H$ for $j(\P)$ satisfying condition (1) in Lemma \ref{lemmaliftingcriterion} below.

\begin{lemma}\label{lemmaliftingcriterion}
Let $j:M\to N$ be an elementary embedding between transitive models of $\ZFC$. Let $\P\in M$ be a notion of forcing, let $G$ be $M$-generic for $\P$ and let $H$ be $N$-generic for $j(\P)$. Then the following are equivalent. 
\begin{enumerate}
\item $j"G\subseteq H$
\item There exists an elementary embedding $j^*:M[G]\to N[H]$, such that $j^*(G)=H$ and $j^*\restrict M =j$.
\end{enumerate}
\end{lemma}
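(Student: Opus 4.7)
The plan is to dispose of the easy direction first and then build the lift explicitly. For $(2)\Rightarrow(1)$, if $j^*$ exists with $j^*\restrict M = j$ and $j^*(G)=H$, then for each $p\in G$ we have $j(p)=j^*(p)\in j^*(G)=H$, so immediately $j''G\subseteq H$.

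The substantive direction is $(1)\Rightarrow(2)$. Assuming $j''G\subseteq H$, I would define the candidate lift by
\[ j^*(\tau_G) := j(\tau)_H \]
for each $\P$-name $\tau\in M$, and then verify in turn: (a) well-definedness, (b) elementarity, (c) $j^*\restrict M = j$, and (d) $j^*(G)=H$. Steps (a) and (b) are where the hypothesis $j''G\subseteq H$ is actually used. For (a), suppose $\tau_G=\sigma_G$; by the Truth Lemma in $M[G]$ there is some $p\in G$ with $p\Vdash^M_\P \tau=\sigma$, and by elementarity of $j$ we have $j(p)\Vdash^N_{j(\P)} j(\tau)=j(\sigma)$. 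Since $j(p)\in j''G\subseteq H$, evaluating gives $j(\tau)_H=j(\sigma)_H$. Step (b) follows by exactly the same pattern applied to an arbitrary formula: $M[G]\models\varphi(\tau_{1,G},\ldots,\tau_{n,G})$ iff some $p\in G$ forces $\varphi(\tau_1,\ldots,\tau_n)$ over $M$, iff (by elementarity of $j$ together with $j(p)\in H$) $j(p)\Vdash^N_{j(\P)}\varphi(j(\tau_1),\ldots,j(\tau_n))$, iff $N[H]\models\varphi(j(\tau_1)_H,\ldots,j(\tau_n)_H)$.

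Steps (c) and (d) amount to bookkeeping with canonical names. For (c), each $x\in M$ satisfies $x=\check{x}_G$, and $j(\check{x})$ is, by elementarity, the canonical $j(\P)$-name for $j(x)$ in $N$, so $j^*(x)=j(\check{x})_H=j(x)$. For (d), let $\dot G$ be the canonical $\P$-name for the generic filter; then $j(\dot G)$ is the canonical $j(\P)$-name for the generic filter in $N$, so $j^*(G)=j^*(\dot G_G)=j(\dot G)_H=H$. The only step requiring genuine attention is the well-definedness in (a) — this is the place where $j''G\subseteq H$ does real work — and it rests on the forcing theorem being available in both $M$ and $N$, which is guaranteed by the hypothesis that both are transitive models of $\ZFC$.
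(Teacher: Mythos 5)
The paper does not prove this lemma but simply defers to \cite{Cummings:Handbook} and \cite{Cummings:AModelInWhichGCH}; your argument is precisely the standard one (Silver's lifting criterion) found in those references, with the lift defined by $j^*(\tau_G)=j(\tau)_H$ and verified via the Truth Lemma and elementarity of $j$, and it is correct. One cosmetic point: in your step (b) the third ``iff'' is not literally biconditional as written, since a condition in $H$ forcing $\varphi(j(\tau_1),\ldots)$ need not lie in $j''G$; the cleanest fix is to prove only the forward implication for every formula and then apply it to $\neg\varphi$ to obtain the converse.
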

\noindent The embedding $j^*$ in condition (2) above is called a $\emph{lift}$ of $j$.

Suppose $j:V\to M$ is an elementary embedding. A set $S\in V$ is said to \emph{generate $j$ over $V$} if $M$ is of the form 
\begin{align}
M&=\{j(h)(s)\mid h:[A]^{<\omega}\to V, s\in [S]^{<\omega}, h\in V\}.\label{seedrep}
\end{align}
where $A\in V$ and $S\subseteq j(A)$. In this context, the elements of $S$ are called seeds. For more on `seed theory' and its applications, see \cite{Hamkins:CanonicalSeedsAndPrickryTrees}. I will often make use of the following lemma which states that the above representation (\ref{seedrep}) of the target model of an elementary embedding remains valid after forcing.

\begin{lemma}\label{lemmaseedpreservation}
If $j:V\to M$ is an elementary embedding generated over $V$ by a set $S\in V$ then any lift of this embedding to a forcing extension $j^*:V[G]\to M[j^*(G)]$ is generated by $S$ over $V[G]$ even if $j^*$ is a class in some further forcing extension $N\supseteq V[G]$.
\end{lemma}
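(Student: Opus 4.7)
My plan is to verify that
\[
M[j^*(G)] = \{\, j^*(h)(s) : h \in V[G],\ h : [A]^{<\omega} \to V[G],\ s \in [S]^{<\omega}\,\},
\]
where $A \in V$ and $S \subseteq j(A) = j^*(A)$ are the sets witnessing that $S$ generates $j$ over $V$. The inclusion $\supseteq$ will be immediate. For the nontrivial inclusion, I would fix $x \in M[j^*(G)]$ and choose a $j(\P)$-name $\tau \in M$ with $\tau^{j^*(G)} = x$. Applying the generation hypothesis for $j$ to $\tau \in M$ would yield $h \in V$ with $h : [A]^{<\omega} \to V$ and a seed $s \in [S]^{<\omega}$ such that $\tau = j(h)(s)$.

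Having produced this ground-model $h$, the next step is to promote it to a witness $h' \in V[G]$. I would let $\dot{h'} \in V$ be the canonical $\P$-name for the function on $[A]^{<\omega}$ whose value at $a$ is the interpretation of $h(a)$ as a $\P$-name via $G$ (with a default convention when $h(a)$ does not have $\P$-name form). Since $\dot{h'}$ is constructed from $h$ by a uniform, formula-definable procedure, elementarity gives that $j^*(\dot{h'}) = j(\dot{h'})$ is obtained from $j(h)$ by the same uniform procedure. Interpreting with $j^*(G)$ and evaluating at $s$, one then computes
\[
j^*(h')(s) = j^*(\dot{h'})^{j^*(G)}(s) = (j(h)(s))^{j^*(G)} = \tau^{j^*(G)} = x,
\]
where $h' = (\dot{h'})^G \in V[G]$; this will exhibit $x$ in the required form.

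The proviso that $j^*$ need only exist as a class in some outer model $N \supseteq V[G]$ will require no separate argument: the only uses of $j^*$ are its elementarity and the relation $j^* \restrict V = j$, neither of which depends on where $j^*$ is defined. I expect the main (modest) obstacle to be verifying that name-evaluation commutes with $j^*$ in the sense used above, but this is a direct consequence of elementarity applied to the uniform syntactic definition of name interpretation.
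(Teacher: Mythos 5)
The paper does not prove this lemma---it is one of the facts (Lemmas \ref{lemmaground}--\ref{lemmalambdadist}) deferred to \cite{Cummings:Handbook} and \cite{Cummings:AModelInWhichGCH}---so there is no in-paper proof to compare against. Your argument is correct and is the standard one: writing $x = \tau^{j^*(G)}$ with $\tau = j(h)(s) \in M$ by the generation hypothesis for $j$, you promote $h \in V$ to $h' \in V[G]$ via $h'(a) := (h(a))^G$ (with a default when $h(a)$ is not a $\P$-name), and then $j^*(h')(s) = (j(h)(s))^{j^*(G)} = \tau^{j^*(G)} = x$; the remark that $j^*$ may only exist as a class of an outer model $N \supseteq V[G]$ is correctly dispatched, since the argument uses nothing beyond elementarity of $j^*$ and $j^* \restrict V = j$. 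The one place I would be slightly more explicit is in separating the two facts underlying the chain of equalities: $j^*(\dot{h'}) = j(\dot{h'})$ comes from $j^* \restrict V = j$, while $j^*\bigl(\dot{h'}^G\bigr) = j^*(\dot{h'})^{j^*(G)}$ comes from applying the elementarity of $j^*$ to the definable recursion computing $\sigma \mapsto \sigma^G$, and the further identification of $j(\dot{h'})^{j^*(G)}(s)$ with $(j(h)(s))^{j^*(G)}$ comes from elementarity of $j : V \to M$; your write-up folds these into one appeal to ``elementarity,'' which is defensible but slightly obscures which embedding is doing what.
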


The following standard lemma, which appears in \cite[Section 1.2]{Cummings:AModelInWhichGCH}, asserts that embeddings witnessed by extenders are preserved by highly distributive forcing.

\begin{lemma}\label{lemmalambdadist}



If $j:V\to M$ is generated by $S\subseteq j(I)$, and $V[G]$ is obtained by ${\leq}|I|$-distributive forcing, then $j$ lifts uniquely to an embedding $j:V[G]\to M[j(G)]$.

\end{lemma}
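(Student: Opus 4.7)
The plan is to apply Lemma \ref{lemmaliftingcriterion} by exhibiting an $M$-generic filter $H\subseteq j(\P)$, where $\P$ denotes the $\leq|I|$-distributive forcing notion yielding $V[G]$, with $j"G\subseteq H$. The natural candidate is the filter $H$ generated in $j(\P)$ by $j"G$, and the main task is to verify $M$-genericity.

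Let $D\in M$ be a dense open subset of $j(\P)$. By the seed representation of $M$, there exist $f\in V$ with $f:[I]^{<\omega}\to V$ and $s\in[S]^{<\omega}$ such that $D=j(f)(s)$. By replacing $f(t)$ with $\P$ on those $t$ for which $f(t)$ fails to be a dense open subset of $\P$, we may assume that $f(t)$ is dense open in $\P$ for every $t\in[I]^{<\omega}$; elementarity together with the fact that $D=j(f)(s)$ is dense open in $j(\P)$ ensures that the value at $s$ is unchanged by this replacement. Since $|[I]^{<\omega}|=|I|$ and $\P$ is $\leq|I|$-distributive, the intersection $D^*:=\bigcap_{t\in[I]^{<\omega}}f(t)$ is dense open in $\P$. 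Pick any $p\in G\cap D^*$; then $p\in f(t)$ for every $t\in\dom(f)$, so elementarity gives $j(p)\in j(f)(t')$ for every $t'\in\dom(j(f))$, and in particular $j(p)\in j(f)(s)=D$. Thus $H$ meets $D$.

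Having produced an $M$-generic $H\subseteq j(\P)$ with $j"G\subseteq H$, Lemma \ref{lemmaliftingcriterion} supplies the lift $j^*:V[G]\to M[H]$ with $j^*\restrict V=j$ and $j^*(G)=H$; the notation $j(G)$ in the statement refers to this $H=j^*(G)$. For uniqueness, any lift $j^*$ is determined on names by $j^*(\tau_G)=j(\tau)_{j^*(G)}$, so it suffices to note that a seed argument in the same spirit as the genericity argument above forces any $M$-generic filter on $j(\P)$ containing $j"G$ to coincide with the filter generated by $j"G$. The only delicate point in the argument is the density of $D^*$ in $\P$, which is supplied precisely by the $\leq|I|$-distributivity hypothesis, so this is not a genuine obstacle but rather the reason for the hypothesis.
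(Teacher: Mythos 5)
Your proof is correct and fills in the details of exactly the argument the paper sketches in one line: given a dense $D=j(f)(s)\in M$ with $f:[I]^{<\omega}\to V$, use $\leq|I|$-distributivity to intersect the $|I|$-many dense sets $f(t)$, pick $p\in G$ in the intersection, and conclude $j(p)\in D$. Your remark on uniqueness is also sound, though it can be phrased more simply without seeds: any two $M$-generic filters on $j(\P)$, one of which contains the other, must be equal (by meeting the dense set of conditions deciding compatibility with a given $q$), so any $M$-generic $H'\supseteq j"G$ coincides with the filter generated by $j"G$.
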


\begin{proof}
Suppose $\P$ is $\leq|I|$-distributive forcing and that $G$ is $V$-generic for $\P$. By intersecting at most $|I|$ open dense subsets of $\P$, one may show that $j"G$ generates an $M$-generic filter on $j(\P)$.
\end{proof}

\subsection{Iterations of Almost Homogeneous Forcing}\label{sectionhomogeneousiteration}



In the course of proving Theorem \ref{theoremwoodin}, the next lemma will be used to show that a certain forcing iteration is almost homogeneous. Recall that a poset $\P$ is \emph{almost homogeneous} if for each pair of conditions, $p,q\in\P$, there is an automorphism $f\in\Aut(\P)$ such that $f(p)$ and $q$ are compatible. If $\P$ is an almost homogeneous forcing notion, a $\P$-name $\dot{x}$ is called \emph{symmetric} if for every automorphism $f\in\Aut(\P)$ one has $\forced_\P f(\dot{x})=\dot{x}$, where $f(\dot{x})$ denotes the $\P$-name obtained from $\dot{x}$ by recursively applying $f$. 

\begin{lemma}\label{lemmahomogeneousiteration}
Suppose $\P_\beta=\langle (\P_\alpha,\dot{\Q}_\alpha)\mid\alpha<\beta\rangle$ is an Easton support iteration and that for each $\alpha<\beta$ one has $\forces_{\P_\alpha}$ ``$\dot{\Q}_\alpha$ is almost homogeneous.'' Suppose further that for each $\alpha<\beta$, one has that $\dot{\Q}_\alpha$ is a symmetric $\P_\alpha$-name; that is, for each automorphism $f\in\Aut(\P_\alpha)$ one has $\forced_{\P_\alpha}f(\dot{\Q}_\alpha)=\dot{\Q}_\alpha$. Then the iteration $\P_\beta$ is almost homogeneous.
\end{lemma}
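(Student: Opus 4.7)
The plan is to proceed by induction on $\beta$. The successor case carries the real content; the limit case is handled by running the successor construction uniformly as a single transfinite recursion so that the chosen automorphisms cohere.

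For the successor step $\beta=\alpha+1$, suppose $\P_\alpha$ is almost homogeneous and let $p,q\in\P_{\alpha+1}$. Writing $p=(p\restrict\alpha,p(\alpha))$ and similarly for $q$, the inductive hypothesis yields $\pi\in\Aut(\P_\alpha)$ with $\pi(p\restrict\alpha)$ and $q\restrict\alpha$ compatible; let $r\in\P_\alpha$ be a common extension. The symmetry hypothesis $\forces_{\P_\alpha}\pi(\dot{\Q}_\alpha)=\dot{\Q}_\alpha$ allows $\pi$ to be lifted to $\bar{\pi}\in\Aut(\P_{\alpha+1})$ by letting $\bar{\pi}$ act on each top-coordinate name: $\bar{\pi}(s)=(\pi(s\restrict\alpha),\pi(s(\alpha)))$, where the second application of $\pi$ is the recursive action on $\P_\alpha$-names. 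Since $r\forces$ ``$\dot{\Q}_\alpha$ is almost homogeneous and $\pi(p(\alpha)),q(\alpha)\in\dot{\Q}_\alpha$,'' I extract a maximal antichain $\{r_i\}_{i\in I}$ below $r$ together with $\P_\alpha$-names $\dot{\sigma}_i$ such that $r_i\forces$ ``$\dot{\sigma}_i\in\Aut(\dot{\Q}_\alpha)$ and $\dot{\sigma}_i(\pi(p(\alpha)))$ is compatible with $q(\alpha)$.'' Amalgamating these into a single $\P_\alpha$-name $\dot{\sigma}$ for an automorphism of $\dot{\Q}_\alpha$ (agreeing with $\dot{\sigma}_i$ below $r_i$ and with the identity off of $\bigcup_i r_i$), define $\rho\in\Aut(\P_{\alpha+1})$ by $\rho(s)=(\pi(s\restrict\alpha),\dot{\sigma}(\pi(s(\alpha))))$. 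Routine verification, using symmetry to ensure $\dot{\sigma}$ really acts on $\dot{\Q}_\alpha$, confirms $\rho\in\Aut(\P_{\alpha+1})$, and extending $r$ into any $r_i$ produces a common refinement of $\rho(p)$ and $q$.

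For the limit case, rather than patching together unrelated lower-stage automorphisms (which need not cohere), I build $\pi_\beta\in\Aut(\P_\beta)$ by a single transfinite recursion on $\alpha\le\beta$. At each stage, I carry along an automorphism $\pi_\alpha\in\Aut(\P_\alpha)$ and choose a $\P_\alpha$-name $\dot{\sigma}_\alpha$ for an automorphism of $\dot{\Q}_\alpha$ exactly as in the successor step, tailored so that $\pi_{\alpha+1}(p\restrict(\alpha+1))$ is compatible with $q\restrict(\alpha+1)$ via some $r_{\alpha+1}$ extending $r_\alpha$. At limit $\alpha\le\beta$, take unions coordinate-wise: because Easton-support conditions in $\P_\alpha$ are determined by their restrictions to proper initial segments, the union is well-defined and lies in $\Aut(\P_\alpha)$. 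Finally, because each $\dot{\sigma}_\alpha$ can be taken to be the identity whenever neither $p(\alpha)$ nor $q(\alpha)$ is nontrivial, $\pi_\beta$ fixes every coordinate outside $\supp(p)\cup\supp(q)$, so it is a legitimate Easton-support automorphism of $\P_\beta$.

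The main obstacle is the bookkeeping at the successor step: one must verify that the piecewise-defined name $\dot{\sigma}$ is forced to be an automorphism of $\dot{\Q}_\alpha$ and that the resulting map $\rho$ actually preserves both the product order of $\P_\alpha * \dot{\Q}_\alpha$ and is a bijection. The symmetry hypothesis is indispensable precisely here: without $\forces_{\P_\alpha}\pi(\dot{\Q}_\alpha)=\dot{\Q}_\alpha$, the expression $\dot{\sigma}(\pi(s(\alpha)))$ would not even type-check as a $\P_\alpha$-name for an element of $\dot{\Q}_\alpha$, and the lifted map would land in a different two-step iteration rather than in $\P_{\alpha+1}$ itself.
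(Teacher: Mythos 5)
The paper itself does not give a proof of this lemma; it simply cites \cite[Lemma 4]{DobrinenFriedman:HomogeneousIteration}, so your proposal is being compared against the standard argument rather than against an argument laid out in this paper. Your overall strategy --- induction, with a lifting step at successors (compose a lower-stage automorphism $\pi$ with a mixed name $\dot\sigma$ for a top-stage automorphism, using the symmetry hypothesis so that the composite lands back in $\P_{\alpha+1}$) and coherent unions at limits --- is the expected one and is essentially correct. The observation that symmetry is exactly what lets $\dot\sigma(\pi(s(\alpha)))$ type-check as a $\P_\alpha$-name for an element of $\dot{\Q}_\alpha$, rather than of $\pi(\dot{\Q}_\alpha)$, is the right thing to emphasize.

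Two technical points deserve more care than the phrase ``routine verification'' suggests. First, the maps $\rho$ you build are automorphisms only up to forced equality of names: for instance $\dot\sigma(\dot\sigma^{-1}(\dot v))$ is forced equal to $\dot v$ but is not literally $\dot v$, so $\rho$ is really an automorphism of the separative quotient or Boolean completion (or of a suitably chosen dense set of canonical names). This is the usual convention in such arguments and does not affect almost homogeneity, but you should say so rather than claim $\rho$ is literally a bijection of $\P_{\alpha+1}$. Second, at a limit stage your recursion carries along a condition $r_\alpha$ witnessing compatibility, and you need the chain $\langle r_\alpha\rangle$ to have a lower bound $r_\lambda\in\P_\lambda$. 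This goes through, but only because you can and should arrange $r_{\alpha+1}\restrict\alpha=r_\alpha$ exactly (choose $\dot t_\alpha$ by fullness so that $r_{\alpha+1}=(r_\alpha,\dot t_\alpha)$) and because $\supp(r_\lambda)\subseteq\supp(p)\cup\supp(q)$ inherits the Easton condition; as written you only say ``$r_{\alpha+1}$ extending $r_\alpha$,'' which by itself does not guarantee a lower bound at limits. Alternatively, you could avoid carrying the $r_\alpha$'s entirely: build the $\pi_\alpha$ and $\dot\sigma_\alpha$ first (mixing $\dot\sigma_\alpha$ over all of $\P_\alpha$ so that $\1\forces\dot\sigma_\alpha(\pi_\alpha(p(\alpha)))\parallel q(\alpha)$), and then construct a common extension of $\pi_\beta(p)$ and $q$ in a single pass by fullness; this sidesteps the chain issue.
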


For a proof of Lemma \ref{lemmahomogeneousiteration} see \cite[Lemma 4]{DobrinenFriedman:HomogeneousIteration}.

\subsection{Some facts concerning Woodin cardinals}\label{sectionwoodin}

Woodin cardinals were originally formulated, by Woodin, for the purpose of reducing the large cardinal consistency strength needed for obtaining a model of the theory ``every set of reals in $L(\R)$ is Lebesgue measurable'' (see the discussion around Theorem 32.9 in \cite{Kanamori:Book}). Although part of the folklore, there has been little published, to the author's knowledge, concerning the preservation of Woodin cardinals through forcing. For example, it is widely known that if $\delta$ is a Woodin cardinal, then the following forcing notions preserve this: (1) any forcing of size less than $\delta$ (see \cite{HamkinsWoodin:SmallForcing} for this result and more), (2) the canonical forcing to achieve $\GCH$, and (3) any ${<}\delta$-closed forcing (see Lemma \ref{lemmawoodinclosed} below). 

I now give some further definitions and lemmas that will be used in the proof of Theorem \ref{theoremwoodin}. The following definition is due to Woodin.

\begin{definition}\label{definitionwoodin}
A cardinal $\delta$ is called a \emph{Woodin cardinal} if for every function $f:\delta\to\delta$ there is a $\kappa<\delta$ with $f"\kappa\subseteq\kappa$ and there is a $j:V\to M$ with critical point $\kappa$ such that $V_{j(f)(\kappa)}\subseteq M$.
\end{definition}

As it turns out, Woodin cardinals have another characterization which is more commonly used in practice. We present several versions of this characterization in the next lemma. First let me give a few definitions. Suppose $A\subseteq V_\delta$ and $\kappa<\delta$. One says that $\kappa$ is \emph{$\gamma$-strong for $A$} if there is a $j:V\to M$ with critical point $\kappa$ such that $V_\gamma\subseteq M$, $j(\kappa)>\gamma$, and $j(A)\cap V_\gamma= A\cap V_\gamma$. By definition $\kappa$ is \emph{${<}\delta$-strong for $A$} if $\kappa$ is $\gamma$-strong for $A$ for each $\gamma<\delta$.

\begin{lemma}\label{lemmawoodin}
The following are equivalent.
\begin{enumerate}
\item[$(1)$]\label{l1} $\delta$ is a Woodin cardinal. 
\item[$(2)$] For every $A\subseteq V_\delta$ the following set is stationary. $$\{\kappa<\delta\mid\textrm{$\kappa$ is ${<}\delta$-strong for $A$}\}$$
\item[$(3)$] For every $A\subseteq V_\delta$ there is a $\kappa<\delta$ that is ${<}\delta$-strong for $A$.
\item[$(4)$] For every $A\subseteq \delta$ there is a $\kappa<\delta$ such that for any $\gamma<\delta$ there is a $j:V\to M$ with critical point $\kappa$ such that $\gamma<j(\kappa)$ and $j(A)\cap\gamma =A\cap\gamma$.
\item[$(5)$] For any pair of sets $A_0,A_1\subseteq\delta$ there is a $\kappa<\delta$ such that for any $\gamma<\delta$ there is a $j:V\to M$ with critical point $\kappa$ such that $\gamma<j(\kappa)$, $j(A_0)\cap\gamma= A_0\cap\gamma$, and $j(A_1)\cap\gamma = A_1\cap\gamma$. 


\end{enumerate}
\end{lemma}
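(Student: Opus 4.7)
My plan is to establish the cycle $(1) \Rightarrow (2) \Rightarrow (3) \Rightarrow (4) \Rightarrow (5) \Rightarrow (1)$. Three of these implications are essentially formal. The implication $(2) \Rightarrow (3)$ holds because stationary sets are nonempty. For $(3) \Rightarrow (4)$, observe that for $A \subseteq \delta$ and $\gamma < \delta$ the ordinals of $V_\gamma$ are precisely $\gamma$, so $A \cap V_\gamma = A \cap \gamma$ and $j(A) \cap V_\gamma = j(A) \cap \gamma$; hence an embedding witnessing $(3)$ for $A$ also witnesses $(4)$. For $(4) \Rightarrow (5)$, interleave $A_0, A_1 \subseteq \delta$ as $A = \{2\alpha : \alpha \in A_0\} \cup \{2\alpha + 1 : \alpha \in A_1\}$ and apply $(4)$ to $A$, extracting agreement on each $A_i$ from agreement on $A$.

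For $(5) \Rightarrow (1)$, given $f : \delta \to \delta$, I would let $A_0 \subseteq \delta$ encode the graph of $f$ via a pairing function, and let $A_1 \subseteq \delta$ encode the set-theoretic structure of $V_\delta$ via a fixed bijection $\phi : V_\delta \to \delta$. Applying $(5)$ to $(A_0, A_1)$ yields $\kappa < \delta$ such that for arbitrarily large $\gamma < \delta$ there is $j : V \to M$ with $\crit(j) = \kappa$, $\gamma < j(\kappa)$, $j(A_0) \cap \gamma = A_0 \cap \gamma$, and $j(A_1) \cap \gamma = A_1 \cap \gamma$. The first agreement forces $j(f)(\alpha) = f(\alpha)$ for every $\alpha < \kappa$, whence $f"\kappa \subseteq \kappa$; for $\gamma$ chosen large enough that $A_1 \cap \gamma$ encodes $V_{j(f)(\kappa)}$ under $\phi$, the second agreement allows one to decode $V_{j(f)(\kappa)} \subseteq M$ inside $M$, supplying the second clause of $(1)$.

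The principal work lies in $(1) \Rightarrow (2)$. Given $A \subseteq V_\delta$ and a club $C \subseteq \delta$, define $f : \delta \to \delta$ by
\[
f(\alpha) \;=\; \max\bigl\{\min(C \setminus (\alpha + 1)),\; g(\alpha)\bigr\},
\]
where $g(\alpha)$ is the least $\beta < \delta$ such that $\alpha$ is not $\beta$-strong for $A$, and $g(\alpha) = 0$ if $\alpha$ is ${<}\delta$-strong for $A$. Applying Woodinness to $f$ yields $\kappa$ with $f"\kappa \subseteq \kappa$ (so $\kappa \in C$ by closure, as a limit point of $C$) and $j : V \to M$ with $\crit(j) = \kappa$ and $V_{j(f)(\kappa)} \subseteq M$. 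I then argue by contradiction: supposing $g(\kappa) = \beta_0 < \delta$, the $(\kappa, \beta_0)$-extender derived from $j$ gives $j_E : V \to M_E$ with $V_{\beta_0} \subseteq M_E$ and a factor map $k : M_E \to M$ that is the identity on $V_{\beta_0}$. The chief obstacle is then to verify $j_E(A) \cap V_{\beta_0} = A \cap V_{\beta_0}$, which would contradict $g(\kappa) = \beta_0$; this is arranged by transferring the minimality-preserving definition of $g$ from $V$ to $M$ via elementarity (where $j(g)$ arises analogously from $j(A)$) and using both $V_{j(f)(\kappa)} \subseteq M$ and the factor map $k$ to reflect a $\beta_0$-strongness witness for $A$ back to $V$.
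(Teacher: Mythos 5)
The paper itself does not supply a proof of this lemma: it cites Kanamori's Theorem 26.14 for the equivalence of (1)--(3) and refers to standard coding (worked out in the author's dissertation) for (4) and (5). Your write-up is therefore an attempt to reconstruct an argument that the paper merely references, so it is fair to assess it on its own terms rather than by comparison with text that is not there.

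The easy implications in your cycle are fine. $(2)\Rightarrow(3)$ is immediate, $(3)\Rightarrow(4)$ is correct because for $A\subseteq\delta$ one has $A\cap V_\gamma=A\cap\gamma$, and $(4)\Rightarrow(5)$ via interleaving works once you restrict attention to $\gamma$ closed under the pairing (a harmless move since agreement on a larger $\gamma$ implies agreement on a smaller one, and such closure points are unbounded). Your $(5)\Rightarrow(1)$ sketch is essentially sound as well: the claim that $f"\kappa\subseteq\kappa$ does not follow quite as directly as you phrase it (agreement of $A_0$ below $\gamma$ does not literally give $j(f)\restrict\kappa=f\restrict\kappa$), but a short contradiction argument works --- if $f(\alpha_0)\geq\kappa$ for some $\alpha_0<\kappa$, take $\gamma$ past the code of $\langle\alpha_0,f(\alpha_0)\rangle$ to force $j(f)(\alpha_0)=f(\alpha_0)$ while $j(f)(\alpha_0)=j(f(\alpha_0))>f(\alpha_0)$. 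Similarly, agreement of $A_0$ past the code of $\langle\kappa,f(\kappa)\rangle$ pins down $j(f)(\kappa)=f(\kappa)$, and then the Mostowski-collapse argument on the $A_1$-coded structure gives $V_{f(\kappa)}^V=V_{f(\kappa)}^M\subseteq M$, which is what you need.

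The genuine gap is in $(1)\Rightarrow(2)$, and you have in fact put your finger on it yourself: you must verify $j_E(A)\cap V_{\beta_0}=A\cap V_{\beta_0}$, and your proposed resolution by ``transferring the minimality-preserving definition of $g$ via elementarity and using the factor map'' does not close it. The factor map $k:M_E\to M$ being the identity on $V_{\beta_0}$ yields $j_E(A)\cap V_{\beta_0}=j(A)\cap V_{\beta_0}$, not $A\cap V_{\beta_0}$; and there is no reason for $j(A)\cap V_{\beta_0}=A\cap V_{\beta_0}$ when $\beta_0>\kappa$, even with $V_{\beta_0}\subseteq M$. Elementarity tells you how $j(g)(\kappa)$ is defined in $M$ in terms of $j(A)$, but it does not compare $j(A)$ with $A$. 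There is also a prior issue: to derive the $(\kappa,\beta_0)$-extender from $j$ with $V_{\beta_0}\subseteq M_E$ you need $V_{\beta_0}\subseteq M$, i.e.\ $\beta_0\leq j(f)(\kappa)$, but $\beta_0=g(\kappa)\leq f(\kappa)$ and $f(\kappa)\leq j(f)(\kappa)$ is not automatic (since $\kappa=\crit(j)$, there is no a priori relation between $f(\kappa)$ and $j(f)(\kappa)$). These are exactly the obstacles Kanamori's proof is organized around, and they require a more careful choice of the auxiliary function and a more delicate reflection argument than a single derived extender; the paper's decision to cite the result rather than reprove it is not cosmetic.
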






For a proof that (1), (2), and (3) are equivalent one may see \cite[Theorem 26.14]{Kanamori:Book}. To see that (4) and (5) are equivalent to (3) one just needs to use standard coding techniques (details are worked out in \cite{Cody:Dissertation}).

If $\delta$ is a Woodin cardinal, then this is witnessed by embeddings as in Lemma \ref{lemmawoodin}(3). By considering a factor diagram, these embeddings can always be assumed to be extender embeddings (see \cite{HamkinsWoodin:SmallForcing}), meaning that the target of such an embedding, $j:V\to M$, is of the form 
$$M=\{j(h)(a)\mid \textrm{$h:V_\kappa\to V$, $a\in V_\gamma$, and $h\in V$}\}.$$

The following lemma will be required in our proof of Theorem \ref{theoremwoodin}. 

\begin{lemma}\label{lemmawoodinmenas}
Suppose $\kappa$ is ${<}\delta$-strong for $A\subseteq V_\delta$ where $\delta$ is a Woodin cardinal. There is a function $\ell:\kappa\to \kappa$ such that for any $\theta<\delta$ there is a $j:V\to M$ witnessing that $\kappa$ is $\theta$-strong for $A$ such that $j(\ell)(\kappa)=\theta$.
\end{lemma}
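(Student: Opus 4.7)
The plan is a Menas-style function construction adapted to the strongness-for-$A$ notion. I would define $\ell:\kappa\to\kappa$ by declaring $\ell(\alpha)$ to be the least ordinal $\theta<\kappa$ such that $\alpha$ fails to be $\theta$-strong for $A$, and setting $\ell(\alpha)=0$ if no such $\theta<\kappa$ exists.

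Given a target $\theta<\delta$, I would first invoke the hypothesis that $\kappa$ is $<\delta$-strong for $A$ to produce an embedding $j_0:V\to M_0$ witnessing $\theta'$-strongness of $\kappa$ for $A$ for some $\theta'$ comfortably above $\theta$. By elementarity, $j_0(\ell)(\kappa)$ is the least $\beta<j_0(\kappa)$ such that $\kappa$ is not $\beta$-strong for $j_0(A)$ inside $M_0$. For each $\beta$ in the range $\kappa\leq\beta<\theta$, the $(\kappa,\beta)$-extender $E_\beta$ derived from $j_0$ lies in $V_{\beta+\omega}\subseteq M_0$, and the standard coherence $j_{E_\beta}(X)\cap V_\beta=j_0(X)\cap V_\beta$ for $X\in V$ combined with $j_0(A)\cap V_\beta=A\cap V_\beta$ shows that $E_\beta$ witnesses, internally in $M_0$, the $\beta$-strongness of $\kappa$ for $j_0(A)$. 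This yields $j_0(\ell)(\kappa)\geq\theta$.

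To secure exact equality, I would factor $j_0$ through the $(\kappa,\theta)$-extender $E$ derived from $j_0$, producing $j:V\to M=\Ult(V,E)$ together with a factor map $k:M\to M_0$ satisfying $k\circ j=j_0$ and $k\restrict V_\theta=\id$. The embedding $j$ still witnesses $\theta$-strongness for $A$ since $j(A)\cap V_\theta=k(j(A))\cap V_\theta=j_0(A)\cap V_\theta=A\cap V_\theta$, and the analysis of the previous paragraph applied to $j$ in place of $j_0$ shows $j(\ell)(\kappa)\geq\theta$. For the reverse inequality I would argue that inside $M$, $\kappa$ fails to be $\theta$-strong for $j(A)$: using that $j$ is generated by seeds in $[\theta]^{<\omega}$ (cf.~Lemma~\ref{lemmaseedpreservation}), so that $M=\{j(h)(s)\mid h\in V,\ s\in[\theta]^{<\omega}\}$, any internal witness to $\theta$-strongness of $\kappa$ for $j(A)$ would correspond to data coded by $E$ itself, and $E$ has length only $\theta$, falling just short of providing such an internal witness.

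The main obstacle will be precisely this last upper bound $j(\ell)(\kappa)\leq\theta$, a minimality feature of short extender ultrapowers that must be verified here together with the ``for $A$'' coherence condition. Once in hand, combining the two inequalities gives $j(\ell)(\kappa)=\theta$ as required.
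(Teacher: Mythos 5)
Your definition of $\ell$ and your identification of the two inequalities $j(\ell)(\kappa)\geq\theta$ and $j(\ell)(\kappa)\leq\theta$ match the structure of the paper's argument, and the lower bound analysis via derived $(\kappa,\beta)$-extenders lying in $V_\theta\subseteq M$ is essentially the same as the (implicit) argument in the paper. The genuine gap is exactly the one you flag: the upper bound, i.e.\ showing $\kappa$ is \emph{not} $\theta$-strong for $j(A)$ inside $M$. You propose to get this from the fact that $M=\Ult(V,E)$ for the derived $(\kappa,\theta)$-extender $E$ and the heuristic that ``$E$ has length only $\theta$, falling just short,'' but this is not a proof: the question is not whether $E$ itself is in $M$, but whether \emph{some other} extender $E'\in M$ witnesses $\theta$-strongness of $\kappa$ for $j(A)$ in $M$, and nothing about the derived-extender construction by itself rules this out. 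If $j_0(\kappa)$ is taken carelessly large, for instance, there is no obvious reason why $M=\Ult(V,E)$ could not contain such a witness.

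The paper's proof dissolves this difficulty with one additional normalization that your argument is missing: among all $j:V\to M$ witnessing $\theta$-strongness of $\kappa$ for $A$, take one with $j(\kappa)$ minimal. Then the upper bound is immediate. For if $E'\in M$ witnessed $\theta$-strongness of $\kappa$ for $j(A)$ (equivalently for $A$, since $j(A)\cap V_\theta=A\cap V_\theta$) in $M$, then since $V_{\kappa+2}^M=V_{\kappa+2}$ the object $E'$ is also an extender in $V$, $j^V_{E'}$ witnesses $\theta$-strongness of $\kappa$ for $A$ in $V$, and $j^V_{E'}(\kappa)=j^M_{E'}(\kappa)$ is an ordinal below $(\beth_\theta^+)^M<j(\kappa)$ (as $j(\kappa)$ is inaccessible in $M$), contradicting the minimality of $j(\kappa)$. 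So the fix for your ``main obstacle'' is not a delicate structural fact about short extender ultrapowers; it is this minimality choice, after which both inequalities fall out. I would also note that your definition of $\ell$ truncates at $\kappa$ by fiat, which is fine for the argument, but the paper instead proves (using the $<\delta$-strongness of $\kappa$ and a composition of embeddings) that the natural unbounded definition already lands in $\kappa$; that proof is worth absorbing, since the same style of reflection argument recurs.
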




\begin{proof}

Define a function $\ell$ with domain $\kappa$ as follows. If $\gamma<\kappa$ is not ${<}\delta$-strong for $A$ then define $\ell(\gamma)$ to be the least ordinal such that $\gamma$ is not $\ell(\gamma)$-strong for $A$. Otherwise define $\ell(\gamma)=0$. 

Let me show that $\ell(\gamma)<\kappa$ for each $\gamma<\kappa$. Suppose $\gamma$ is not ${<}\delta$-strong for $A$ and that $\ell(\gamma)\geq\kappa$. I will show that since $\kappa$ is ${<}\delta$-strong for $A$ it follows that $\gamma$ is also ${<}\delta$-strong for $A$, a contradiction. Choose $\theta<\delta$ and let $j:V\to M$ witness that $\kappa$ is $\theta$-strong for $A$. Since $\ell(\gamma)\geq\kappa$ it follows that $\gamma$ is ${<}\kappa$-strong for $A$. By elementarity $\gamma=j(\gamma)$ is ${<}j(\kappa)$-strong for $j(A)$ in $M$. Thus $\gamma$ is $\theta$-strong for $j(A)$ in $M$. Let $i:M\to N$ witness this. Now let $j^*:=i\circ j:V\to N$. It follows that $\gamma$ is the critical point of $j^*$, that $j^*(\gamma)=i(j(\gamma))=i(\gamma)>\theta$, and $j^*(A)\cap\theta = i(j(A))\cap\theta = j(A)\cap\theta = A\cap\theta$. Hence $\gamma$ is $\theta$-strong for $A$. This implies that $\gamma$ is ${<}\delta$-strong for $A$, a contradiction. This shows that $\ell$ is a function from $\kappa$ to $\kappa$.

Now fix $\theta<\delta$ and let $j:V\to M$ be an embedding witnessing that $\kappa$ is $\theta$-strong for $A$ such that $\kappa$ is not $\theta$-strong for $A$ in $M$. Such an embedding can be obtained by taking $j(\kappa)$ to be minimal. It follows that $\kappa$ is $\beta$-strong for $A$ in $M$ for every $\beta<\theta$. Thus, $j(f)(\kappa)=\theta$.
\end{proof}

The next widely known lemma is important for our proof of Theorem \ref{theoremwoodin}, because it easily implies that if $\delta$ is a Woodin cardinal, then one can force the continuum function to agree with any Easton function on the interval $[\delta,\infty)$.

\begin{lemma}\label{lemmawoodinclosed}
If $\delta$ is a Woodin cardinal and $\P$ is ${<}\delta$-closed then $\delta$ remains Woodin after forcing with $\P$.\footnote{I would like to thank Arthur Apter for an enlightening discussion concerning Lemma \ref{lemmawoodinclosed} and its proof.}
\end{lemma}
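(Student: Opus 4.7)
The plan is to verify characterization (3) of Lemma~\ref{lemmawoodin} in $V[G]$, where $G$ is $V$-generic for $\P$. First, the preservation preliminaries: since $\P$ is ${<}\delta$-closed and $\delta$ is inaccessible in $V$, $\P$ adds no new bounded subsets of $\delta$, so $V_\delta^{V[G]}=V_\delta^V$, $\delta$ remains inaccessible in $V[G]$, and for any $A\in V[G]$ with $A\subseteq V_\delta$ and any $\gamma<\delta$, the truncation $A\cap V_\gamma$ is a $V$-set (decided by a single condition $p_\gamma\in G$ obtained as a lower bound of a ${<}\delta$-length descending sequence of deciding conditions).

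Fix $A\in V[G]$ with $\P$-name $\dot A\in V$. For each $\gamma<\delta$, let $S_\gamma=\{C\subseteq V_\gamma:C\in V,\ \exists p\in\P,\ p\Vdash\dot A\cap V_{\check\gamma}=\check C\}$; by inaccessibility, $|S_\gamma|\le 2^{|V_\gamma|}<\delta$. The strategy is to produce in $V$ a single $\kappa<\delta$ such that for every $\gamma<\delta$ and every $B\in S_\gamma$, $\kappa$ is $\gamma$-strong for $B$ in $V$. This selection invokes the stationary-set formulation (characterization (2) of Lemma~\ref{lemmawoodin}) applied to each member of the family $\bigcup_\gamma S_\gamma$, together with an intersection argument exploiting the bound $|S_\gamma|<\delta$ at each stage. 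I would then claim this $\kappa$ is ${<}\delta$-strong for $A$ in $V[G]$.

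Given $\gamma<\delta$ and the actual $B_\gamma:=A\cap V_\gamma$, choose in $V$ an extender embedding $j:V\to M$ with $\crit(j)=\kappa$, $j(\kappa)>\gamma$, $V_\gamma^V\subseteq M$, and $j(B_\gamma)\cap V_\gamma=B_\gamma$. Lift $j$ to $j^*:V[G]\to M[H]$ by constructing in $V[G]$ an $M$-generic filter $H\subseteq j(\P)$ with $j"G\subseteq H$: this uses the ${<}j(\delta)$-closure of $j(\P)$ in $M$ combined with the extender representation $M=\{j(h)(\bar a):h\in V,\bar a\in[V_\gamma]^{<\omega}\}$, which restricts the relevant $M$-dense subsets of $j(\P)$ to set-many. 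From $p_\gamma\Vdash_\P\dot A\cap V_{\check\gamma}=\check{B_\gamma}$, elementarity together with $j"G\subseteq H$ yields $j(p_\gamma)\in H$ forcing $j(\dot A)\cap V_{\check{j(\gamma)}}=\check{j(B_\gamma)}$, so $j^*(A)\cap V_{j(\gamma)}=j(B_\gamma)$; intersecting with $V_\gamma$ and invoking the $\gamma$-strongness of $\kappa$ for $B_\gamma$ delivers $j^*(A)\cap V_\gamma=j(B_\gamma)\cap V_\gamma=B_\gamma=A\cap V_\gamma$, as required.

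The main obstacle will be the lifting step when $\P$ has rank exceeding $\delta$: $j(\P)$ may then be a proper class element of $M$, and naive enumeration of its $M$-dense subsets does not terminate inside $V[G]$. The resolution exploits the extender representation of $M$ to cut down the relevant $M$-dense subsets to a collection of size at most $|V_\gamma|$, which can then be met by a well-ordered descending construction of $H$ in $V[G]$ using the ${<}j(\delta)$-closure of $j(\P)$ in $M$. A secondary delicate point is the selection of a single $\kappa$ simultaneously $\gamma$-strong for every $B\in S_\gamma$ across all $\gamma<\delta$, since a naive intersection of $\delta$-many stationary sets need not be nonempty; here one must enumerate the family $\bigcup_\gamma S_\gamma$ carefully and diagonalize, exploiting both $|S_\gamma|<\delta$ and the Menas-function technology from Lemma~\ref{lemmawoodinmenas} to extract the desired uniform $\kappa$.
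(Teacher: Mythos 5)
Your plan runs into a genuine obstruction at the point you flag as a ``secondary delicate point,'' and I do not think it can be repaired along the lines you indicate. You need a single $\kappa<\delta$ such that for every $\gamma<\delta$ and every $B\in S_\gamma$, $\kappa$ is $\gamma$-strong for $B$ in $V$. This is false in general. Take $\P=\Add(\delta,1)$ and $\dot A$ the canonical name for the generic subset of $\delta$, viewed as a subset of $V_\delta$. Then $S_\gamma$ contains every $V$-subset of $\gamma$, in particular $\{\kappa\}$ for every $\kappa<\gamma$. But no $\kappa$ is $\gamma$-strong for $\{\kappa\}$ once $\gamma>\kappa$: any candidate $j$ with $\crit(j)=\kappa$ and $j(\kappa)>\gamma$ has $j(\{\kappa\})\cap V_\gamma=\{j(\kappa)\}\cap V_\gamma=\emptyset$, whereas $\{\kappa\}\cap V_\gamma=\{\kappa\}$. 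So for \emph{every} $\kappa<\delta$ there is some $\gamma$ and some $B\in S_\gamma$ for which $\kappa$ fails, and no enumeration or diagonal intersection resolves this. Nor does coding the family $\bigcup_\gamma S_\gamma$ into a single $A^*\subseteq V_\delta$ and reflecting $A^*$: from $j(A^*)\cap V_\gamma=A^*\cap V_\gamma$ you recover the $j$-\emph{sequence} entry $j(\vec S)(\xi)$, not $j(S_\xi)$ itself, and these disagree once $\xi\geq\kappa$. The Menas technology of Lemma~\ref{lemmawoodinmenas} addresses a different problem (arranging $j(\ell)(\kappa)=\theta$) and does not help here. The third paragraph of your argument, which ``invokes the $\gamma$-strongness of $\kappa$ for $B_\gamma$'' to deduce $j(B_\gamma)\cap V_\gamma=B_\gamma$, therefore rests on a claim that cannot be established.

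The paper's proof avoids the issue entirely by working with Definition~\ref{definitionwoodin} rather than characterization~(3) of Lemma~\ref{lemmawoodin}. There the witness required is only $V_{j(\dot f^H)(\kappa)}\subseteq M[j(H)]$, and the only piece of information one needs from the generic is the single value $j(\dot f^H)(\kappa)$, which is pinned down by one condition $p_\alpha$ (any $\alpha>\kappa$ suffices). One builds a reference function $F\in V$ by deciding $\dot f$ along a $\delta$-descending sequence below a given $r$, applies Woodinness of $\delta$ in $V$ to $F$, lifts $j$ through $\P$ using its $\le|V_\kappa|$-distributivity and Lemma~\ref{lemmalambdadist}, and uses $j(p_\alpha)\Vdash j(\dot f)\restrict(j(\kappa)+1)=j(F)\restrict(j(\kappa)+1)$ (which covers $\kappa$) to compute $j(\dot f^H)(\kappa)=j(F)(\kappa)$; no pointwise reflection $j(B)\cap V_\gamma=B$ of an arbitrary $V$-set $B$ is ever required, which is precisely why the argument goes through. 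If you wish to keep your framing, verify Definition~\ref{definitionwoodin} in $V[G]$ by the density argument just described and then invoke the ZFC-provable equivalence with characterization~(3), rather than attacking~(3) head on. (A side remark on your lifting step: the ``well-ordered descending construction using ${<}j(\delta)$-closure of $j(\P)$ in $M$'' would need $M$ to be closed under long sequences in $V[G]$, which extenders do not provide; the correct argument, as in Lemma~\ref{lemmalambdadist}, shows directly that $j''H$ already generates the $M$-generic filter because every relevant $M$-dense set has the form $j(h)(a)$ with $h:V_\kappa\to V$, and a single $j(p_h)$ meets all of these at once.)
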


\begin{proof} For this proof, I will use the definition of Woodin cardinal as opposed to one of the characterizations given in Lemma \ref{lemmawoodin}.
Let $G$ be generic for $\P$ and suppose $p\in G$ and $p\forces\dot{f}:\delta\to\delta$. Let $D$ be the set of conditions $q\leq p$ such that $q$ forces there is a $\kappa<\delta$ such that $\dot{f}"\kappa\subseteq\kappa$ and there is a $j:V[\dot{G}]\to M[j(\dot{G})]$ with critical point $\kappa$ and $(V_{j(\dot{f})(\kappa)})^{V[\dot{G}]}\subseteq M[j(\dot{G})]$. Note that the existence of the previous embedding is equivalent to the existence of an extender that has a first order definition. I will show that $D$ is dense below $p$. Choose $r\leq p$ and use the ${<}\delta$-closure of $\P$ to find a descending sequence $\langle p_\alpha\mid\alpha<\delta\rangle$ of conditions below $r$ such that $p_\alpha$ decides $\dot{f}\restrict(\alpha+1)$ for each $\alpha<\delta$. Let $F:\delta\to\delta$ be the function in $V$ determined by the sequence $\langle p_\alpha\mid\alpha<\delta\rangle$. By applying the Woodinness of $\delta$ in $V$ to $F$ find a $\kappa<\delta$ such that $F"\kappa\subseteq \kappa$ and there is a $j:V\to M$ with critical point $\kappa$ and $V_{j(F)(\kappa)}\subseteq M$. In addition, by taking a factor embedding if necessary, one may assume without loss of generality that $M=\{j(h)(a)\mid \textrm{$h:V_\kappa\to V$, $a\in V_{j(F)(\kappa)}$, and $h\in V$}\}$. Now choose $\alpha<\delta$ large enough so that $p_\alpha$ forces $\dot{f}$ to agree with $F$ up to and including at $\kappa$. Let $H$ be $V$-generic for $\P$ with $p_\alpha\in H$. Then $\dot{f}^H"\kappa\subseteq\kappa$. Since $\P$ is ${\leq}\kappa$-distributive, it follows by Lemma \ref{lemmalambdadist} that $j$ lifts to $j:V[H]\to M[j(H)]$. By elementarity and the fact that $p_\alpha\in H$, it follows that $j(\dot{f}^H)(\kappa)=j(F)(\kappa)$. Since $\P$ is ${<}\delta$-closed, it follows that $(V_{j(F)(\kappa)})^{V[H]}=V_{j(F)(\kappa)}$. Thus, $(V_{j(\dot{f}^H)(\kappa)})^{V[H]}=(V_{j(F)(\kappa)})^{V[H]}=V_{j(F)(\kappa)}\subseteq M\subseteq M[j(H)]$. This shows that $p_\alpha\in D$ and thus that $D$ is dense below $p$.

Now choose a condition $q\in G\cap D$ so that by the definition of $D$ it follows that in $V[G]$ there is a $\kappa<\delta$ such that $f"\kappa\subseteq\kappa$ and there is a $j:V[G]\to M[j(G)]$ with critical point $\kappa$ and $V[G]_{j(f)(\kappa)}\subseteq M[j(G)]$.
\end{proof}

\subsection{Sacks forcing on uncountable cardinals}

Kanamori gave a definition for a version of Sacks forcing on uncountable cardinals in \cite{Kanamori:PerfectSetForcing}. In what follows, I will use a definition of Sacks forcing on inaccessible cardinals given by Friedman and Thompson in \cite{FriedmanThompson:PerfectTreesAndElementaryEmbeddings} (and used in \cite{FriedmanHonzik:EastonsTheoremAndLargeCardinals}), which works particularly well for preserving large cardinals; for the reader's convenience, I will recall the definition and some basic properties of this forcing.

Suppose $\kappa$ is an inaccessible cardinal. Then $p\subseteq 2^{<\kappa} $ is a \emph{perfect $\kappa$-tree} if the following conditions hold.
\begin{enumerate}
\item[$(1)$] If $s\in p$ and $t \in 2^{<\kappa}$ is an initial segment of $s$, then $t\in p$.
\item[$(2)$] If $\langle s_\alpha\mid\alpha<\eta\rangle$ is a sequence of elements of $p$ with $\eta<\kappa$ where $s_\alpha\subseteq s_\beta$ for $\alpha<\beta$, then $\bigcup_{\alpha<\eta} s_\alpha\in p$.
\item[$(3)$] For each $s\in p$ there is a $t\in p$ with $s\subseteq t$ and $t\concat 0,t\concat 1\in p$.
\item[$(4)$] Let $\Split(p)=\{s\in p\mid s\concat 0,s\concat 1\in p\}$. Then for some unique closed unbounded set $C(p)\subseteq\kappa$, $\Split(p)=\{s\in p\mid \length(s)\in C(p)\}$.
\end{enumerate}

\emph{Sacks forcing on $\kappa$} is denoted by $\Sacks(\kappa)$ and conditions in $\Sacks(\kappa)$ are perfect $\kappa$-trees. For $p,q\in\Sacks(\kappa)$, one says that $p$ is stronger than $q$ and writes $p\leq q$ if and only if $p\subseteq q$. For a condition $p\in\Sacks(\kappa)$ let $\langle \alpha_i\mid i<\kappa\rangle$ be the increasing enumeration of $C(p)$. Let $\Split_i(p):=\{s\in p \mid \length(s)=\alpha_i\}$ denote the $i^{th}$ splitting level of $p$. For $p,q\in\Sacks(\kappa)$, define $p\leq_\beta q$ if and only if $p\leq q$ and $\Split_i(p)=\Split_i(q)$ for $i<\beta$. It is easy to verify that $\Sacks(\kappa)$ is ${<}\kappa$-closed and satisfies the $\kappa^{++}$-chain condition under $\GCH$. By standard arguments, this implies that $\Sacks(\kappa)$ preserves cardinals less than or equal to $\kappa$ and greater than or equal to $\kappa^{++}$ under $\GCH$. Furthermore, as shown in \cite{FriedmanThompson:PerfectTreesAndElementaryEmbeddings}, $\Sacks(\kappa)$ satisfies the following fusion property. If $\langle p_\alpha\mid\alpha<\kappa\rangle$ is a decreasing sequence of conditions in $\Sacks(\kappa)$ and for each $\alpha<\kappa$, $p_{\alpha+1}\leq_{\alpha}p_{\alpha}$, then the sequence has a lower bound in $\Sacks(\kappa)$. The sequence $\langle p_\alpha\mid\alpha<\kappa\rangle$ is called a fusion sequence. This fusion property implies that $\Sacks(\kappa)$ preserves $\kappa^+$ by the following straightforward argument. Suppose $p\forces \dot{f}:\check{\kappa}\to\check{\kappa}^+$. One can build a fusion sequence $\langle p_\alpha\mid\alpha<\kappa\rangle$ such that for each $\alpha<\kappa$, the condition $p_\alpha\in\Sacks(\kappa)$ forces $\dot{f}(\check{\alpha})$ to equal the check name of an element of some set $A_\alpha=\{{\beta}_\xi\mid\xi<2^\alpha\}$ where each $\beta_\xi$ is less than $\kappa^+$. By the fusion property, this sequence has a lower bound, call it $r$, and it follows that $r\forces \ran(\dot{f})\subseteq \bigcup_{\alpha<\kappa}A_\alpha$. Since $\bigcup_{\alpha<\kappa}A_\alpha$ has size at most $\kappa$, it follows that $r$ forces $\ran(\dot{f})$ to be bounded below $\kappa^+$. The forcing $\Sacks(\kappa)$ adds a single subset of $\kappa$ given by a cofinal branch through $2^{<\kappa}$ and preserves cardinals under $\GCH$.

Define $\Sacks(\kappa,\lambda)$ to be the product forcing obtained by taking the product of $\lambda$-many copies of $\Sacks(\kappa)$ with supports of size less than or equal to $\kappa$. Thus, a condition $\vec{p}\in\Sacks(\kappa,\lambda)$ can be thought of as a function $\vec{p}:\lambda\to\Sacks(\kappa)$ such that the set $\{\alpha<\lambda\mid \vec{p}(\alpha)\not= 2^{<\kappa}\}$ has size at most $\kappa$. The ordering on $\Sacks(\kappa,\lambda)$ is given by the usual product ordering. It is easy to verify that $\Sacks(\kappa,\lambda)$ is ${<}\kappa$-closed and satisfies the $\kappa^{++}$-chain condition under $\GCH$. Thus, assuming $\GCH$, the poset $\Sacks(\kappa,\lambda)$ preserves cardinals less than or equal to $\kappa$ and greater than or equal to $\kappa^{++}$. To show that $\Sacks(\kappa,\lambda)$ preserves $\kappa^{+}$ one may use the following generalized fusion property (see \cite{FriedmanThompson:PerfectTreesAndElementaryEmbeddings}). For $X\subseteq\lambda$ and $\vec{p},\vec{q}\in\Sacks(\kappa,\lambda)$ write $\vec{p}\leq_{\beta,X}\vec{q}$ if and only if $\vec{p}\leq \vec{q}$ and for each $\alpha\in X$, $\vec{p}(\alpha)\leq_\beta\vec{q}(\alpha)$. The generalized fusion property for $\Sacks(\kappa,\lambda)$ asserts that if $\langle \vec{p}_\alpha\mid\alpha<\kappa\rangle$ is a descending sequence of conditions in $\Sacks(\kappa,\lambda)$ and there is an increasing sequence $\langle X_\alpha\mid\alpha<\kappa\rangle$ of subsets of $\lambda$, each of size less than $\kappa$, such that $\bigcup_{\alpha<\kappa}X_\alpha=\bigcup_{\alpha<\kappa}\supp(\vec{p}_\alpha)$, and for each $\beta<\kappa$, $\vec{p}_{\beta+1}\leq_{\beta,X_\beta}\vec{p}_{\beta}$, then there is a lower bound of the sequence $\langle \vec{p}_\alpha\mid\alpha<\kappa\rangle$ in $\Sacks(\kappa,\lambda)$. The above generalized fusion property implies that $\kappa^+$ is preserved by the following argument. Suppose $\vec{p}\forces \dot{f}:\check{\kappa}\to\check{\kappa}^+$. One can build a fusion sequence $\langle \vec{p}_\alpha\mid\alpha<\kappa\rangle$ such that for each $\alpha<\kappa$, the condition $\vec{p}_\alpha$ forces $\dot{f}(\alpha)$ to belong to a subset of $\kappa^+$ of size $(2^\alpha)^\gamma$ for some $\gamma<\kappa$. A lower bound $\vec{r}$ of this fusion sequence forces a bound on $f$ below $\kappa^+$.

Since $\Sacks(\kappa,\lambda)$ is not $\kappa^+$-c.c. more than Lemma \ref{lemmachain} will be required to see that $\Sacks(\kappa,\lambda)$ preserves closure under $\kappa$ sequences on inner models. For this reason will need the following.
\begin{lemma}\label{lemmaclosuresacks}
Suppose $M\subseteq V$ is an inner model with $M^\kappa\subseteq M$ in $V$. If $G$ is $V$-generic for $\Sacks(\kappa,\lambda)$, then $M[G]^\kappa\subseteq M[G]$ in $V[G]$.
\end{lemma}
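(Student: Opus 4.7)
The plan is to adapt the generalized fusion argument sketched just before the lemma—the one showing $\Sacks(\kappa,\lambda)$ preserves $\kappa^+$—replacing ``values below $\kappa^+$'' with ``$M$-names.'' Given a $\Sacks(\kappa,\lambda)$-name $\dot{f}$ and a condition $\vec{p}\in G$ forcing $\dot{f}:\check{\kappa}\to V$ with $\dot{f}(\check{\alpha})\in M[\dot{G}]$ for all $\alpha<\kappa$, I will show that the set
\[
D := \{\,\vec{q}\leq \vec{p} : \text{there is some } \dot{\sigma}\in M \text{ with } \vec{q}\forces \dot{f}=\dot{\sigma}\,\}
\]
is dense below $\vec{p}$. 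Genericity then supplies a $\vec{q}\in G\cap D$, so $\dot{f}^G=\dot{\sigma}^G\in M[G]$, which is exactly what the lemma asserts.

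To prove density, fix $\vec{p}^*\leq\vec{p}$ and build in $V$ a fusion sequence $\langle \vec{p}_\alpha : \alpha<\kappa\rangle$ with $\vec{p}_0=\vec{p}^*$, an increasing family $\langle X_\alpha : \alpha<\kappa\rangle$ of subsets of $\lambda$ of size $<\kappa$ with $\bigcup_\alpha X_\alpha=\bigcup_\alpha\supp(\vec{p}_\alpha)$, and sets $T_\alpha$ of $M$-names such that $\vec{p}_{\alpha+1}\leq_{\alpha,X_\alpha}\vec{p}_\alpha$ and every branch piece of $\vec{p}_{\alpha+1}$ at the $\alpha$-th splitting level on $X_\alpha$ decides $\dot{f}(\check{\alpha})$ to equal the interpretation of some $M$-name in $T_\alpha$. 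At stage $\alpha+1$ there are fewer than $\kappa$ such branch pieces below $\vec{p}_\alpha$, each of which can be extended to decide $\dot{f}(\check{\alpha})$ as $\dot{y}^{\dot{G}}$ for some $M$-name $\dot{y}$, precisely because $\vec{p}$ forces $\dot{f}(\check{\alpha})\in M[\dot{G}]$; glueing these extensions along their disjoint branches produces $\vec{p}_{\alpha+1}$, and $T_\alpha$ collects the chosen names.

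The decisive point is that $\Sacks(\kappa,\lambda)\in M$, so by transitivity every individual condition, in particular each $\vec{p}_\alpha$, belongs to $M$; and each $T_\alpha$, being a set of fewer than $\kappa$ many $M$-names, lies in $M$ by $M^{<\kappa}\subseteq M$ (a consequence of $M^\kappa\subseteq M$). Thus—although the fusion is constructed in $V$ using the $V$-name $\dot{f}$—the sequences $\langle \vec{p}_\alpha\rangle$ and $\langle T_\alpha\rangle$ are $\kappa$-sequences of elements of $M$, and hence themselves lie in $M$ by $M^\kappa\subseteq M$. Working inside $M$ one can then form the fusion limit $\vec{p}_\kappa$ (the coordinatewise intersection of the $\vec{p}_\alpha$), which exists by the generalized fusion property, together with an $M$-name $\dot{\sigma}$ whose $\alpha$-th coordinate is the interpretation of the unique name in $T_\alpha$ singled out by the generic branch of $\vec{p}_{\alpha+1}$ along $X_\alpha$. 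Then $\vec{p}_\kappa\leq\vec{p}^*$ and $\vec{p}_\kappa\forces\dot{f}=\dot{\sigma}$, placing $\vec{p}_\kappa$ in $D$.

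The main obstacle is purely the combinatorial bookkeeping of the fusion itself: choosing the $X_\alpha$ so their union catches the full support, and gluing the individual branch extensions together in a way compatible with $\leq_{\alpha,X_\alpha}$. These are exactly the issues handled in the $\kappa^+$-preservation argument of \cite{FriedmanThompson:PerfectTreesAndElementaryEmbeddings} and require no essential modification here; the genuinely new ingredient is only the observation that, by the closure hypothesis $M^\kappa\subseteq M$, all the data produced by the fusion automatically fall inside $M$.
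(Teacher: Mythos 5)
Your proof is correct and takes essentially the same approach as the paper's brief sketch, which also proceeds by generalized fusion (deferring the details to \cite{FriedmanThompson:PerfectTreesAndElementaryEmbeddings}) together with the observation that the resulting condition and its decoding data land in $M$ via the closure hypothesis $M^\kappa\subseteq M$. Where the paper phrases the conclusion as ``$r$ forces that $X$ can be determined from $r$ and $G$,'' you make this explicit by assembling an $M$-name $\dot\sigma$ with $\vec{p}_\kappa\forces\dot f=\dot\sigma$; that is a faithful unpacking of the same argument rather than a different route.
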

\begin{proof}
Let me recall the proof given in \cite[Lemma 3]{FriedmanThompson:PerfectTreesAndElementaryEmbeddings}. Let $G$ be generic for $\Sacks(\kappa,\lambda)$. Suppose $X$ is a $\kappa$-sequence of ordinals in $V[G]$ and that this is forced by $p\in G$. Using generalized fusion, one can show that every $q\leq p$ can be extended to a condition $r$ such that $r$ forces that $X$ can be determined from $r$ and $G$. This implies that there is such an $r\in G$. Since $r$ and $G$ are both in $M[G]$, it follows that $X\in M[G]$. 
\end{proof}


Easton's Lemma states that if $\P$ and $\Q$ are forcing notions where $\P$ is $\kappa^+$-c.c. and $\Q$ is ${\leq}\kappa$-closed, then $\forces_\P$ ``$\Q$ is ${\leq}\kappa$-distributive.'' The following lemma, which is analagous to Easton's Lemma, will be important for the proof of our main theorem. 
\begin{lemma}\label{lemmaeastonforsacks}
Suppose $\P$ is any ${\leq}\kappa$-closed forcing and $\alpha$ is an ordinal. Then after forcing with $\Sacks(\kappa,\alpha)$, $\P$ remains ${\leq}\kappa$-distributive.
\end{lemma}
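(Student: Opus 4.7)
The plan is to adapt the standard proof of Easton's Lemma, replacing the $\kappa^+$-chain condition hypothesis with the generalized fusion property of $\Sacks(\kappa,\alpha)$. Working in $V$, this amounts to showing that for the product forcing $\Sacks(\kappa,\alpha)\times\P$: whenever $(\vec{p}_0,q_0)\forces\dot{f}:\check{\kappa}\to\ORD$, there exists $(\vec{r},q)\leq(\vec{p}_0,q_0)$ forcing $\dot{f}\in V[\dot{G}]$, where $\dot{G}$ is the canonical name for the $\Sacks(\kappa,\alpha)$-generic. By the Product Lemma this is equivalent to the statement that $\P$ adds no new $\kappa$-sequences of ordinals over $V[G]$, i.e.\ that $\P$ is ${\leq}\kappa$-distributive there.

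The construction builds, by simultaneous recursion on $\beta<\kappa$, a generalized fusion sequence $\langle\vec{p}_\beta\mid\beta<\kappa\rangle$ in $\Sacks(\kappa,\alpha)$ with supports $\langle X_\beta\mid\beta<\kappa\rangle$ and a descending sequence $\langle q_\beta\mid\beta<\kappa\rangle$ in $\P$. At a successor stage $\beta+1$, the sub-conditions of $\vec{p}_\beta$ obtained by choosing, for each $\gamma\in X_\beta$, one of the at most $2^\beta$ branches through the first $\beta$ splittings of $\vec{p}_\beta(\gamma)$ form a collection $\langle\vec{p}_\beta^{s_\xi}\mid\xi<\eta\rangle$ of size $\eta<\kappa$. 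I would recursively pick refinements $\vec{p}_\beta^{s_\xi,*}\leq\vec{p}_\beta^{s_\xi}$ that do not thin the selected ancestor chains, together with a descending sequence $q_\beta\geq q_\beta^{s_0}\geq q_\beta^{s_1}\geq\cdots$ in $\P$ (using ${<}\kappa$-closure of $\P$), so that each pair $(\vec{p}_\beta^{s_\xi,*},q_\beta^{s_\xi})$ decides $\dot{f}(\check{\beta})$. Let $q_{\beta+1}$ be a lower bound of $\{q_\beta^{s_\xi}\mid\xi<\eta\}$ in $\P$, which exists by ${\leq}\kappa$-closure, and let $\vec{p}_{\beta+1}$ be the result of gluing the refined sub-conditions back together. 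Since each refinement preserves the relevant ancestor chain on every coordinate in $X_\beta$, the resulting $\vec{p}_{\beta+1}$ satisfies $\vec{p}_{\beta+1}\leq_{\beta,X_\beta}\vec{p}_\beta$. Limit stages $\beta<\kappa$ are handled by taking $q_\beta$ below all earlier $q_\gamma$ via ${\leq}\kappa$-closure, letting $X_\beta\supseteq\bigcup_{\gamma<\beta}X_\gamma$, and forming $\vec{p}_\beta$ as a fusion-limit in the usual way.

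At the end, the generalized fusion property produces a lower bound $\vec{r}\in\Sacks(\kappa,\alpha)$ of the sequence $\langle\vec{p}_\beta\mid\beta<\kappa\rangle$, and ${\leq}\kappa$-closure gives a lower bound $q\in\P$ of $\langle q_\beta\mid\beta<\kappa\rangle$. Because $\vec{r}\leq_{\beta,X_\beta}\vec{p}_\beta$ for every $\beta$, the Sacks generic below $\vec{r}$ uniquely identifies for each $\beta<\kappa$ which sub-condition $\vec{p}_\beta^{s_\xi}$ it falls under, and the decision made at that stage then determines $\dot{f}(\check{\beta})$. Hence $(\vec{r},q)$ forces $\dot{f}$ to be computable from $\dot{G}$ alone, so $\dot{f}\in V[\dot{G}]$ as required. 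I anticipate the principal technical subtlety to be the successor-stage gluing: one must check that refinements of distinct sub-conditions, each made only above the chosen ancestor chains, combine into a single Sacks condition $\vec{p}_{\beta+1}$ bearing the relation $\leq_{\beta,X_\beta}$ to $\vec{p}_\beta$. This is the standard mechanism underlying generalized fusion, and the interaction with $\P$ is essentially transparent once closure is used to choose a common extension of the bookkeeping conditions $q_\beta^{s_\xi}$.
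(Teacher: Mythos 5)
Your proposal is correct and follows essentially the same approach the paper sketches (and which it attributes to Friedman--Honzik, Lemma~3.7): work in the product $\Sacks(\kappa,\alpha)\times\P$, build a generalized fusion sequence in the Sacks coordinate interleaved with a descending sequence in $\P$ via closure, and produce a master condition $(\vec r,q)$ that forces $\dot f$ to be read off from the Sacks generic, so that $\P$ adds no new $\kappa$-sequences over $V[G]$. You simply supply the bookkeeping details that the paper elides, and the successor-stage gluing and branch counting (using inaccessibility of $\kappa$ to keep the number of sub-conditions below $\kappa$) are handled correctly.
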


\begin{proof}
Suppose $p\in\Sacks(\kappa,\lambda)\times\P$ forces that $\dot{f}$ is a function with $\dom(\dot{f})=\kappa$. One can show, using generalized fusion in the first coordinate and closure in the second coordinate, that every condition $q$ below $p$ can be extended to a condition $r$ which forces over $\Sacks(\kappa,\lambda)\times\P$ that the values of $\dot{f}$ can be determined from $r$ and $G$, the generic for $\Sacks(\kappa,\lambda)$. 
\end{proof}



For a more detailed proof of Lemma \ref{lemmaeastonforsacks} see \cite[Lemma 3.7]{FriedmanHonzik:EastonsTheoremAndLargeCardinals}.

%
%

\section{Proof of Theorem \ref{theoremwoodin}}

Recall the statement of Theorem \ref{theoremwoodin}.

\begin{theorem31}
Suppose $\GCH$ holds, $F:\REG\to\CARD$ is an Easton function, and $\delta$ is a Woodin cardinal with $F"\delta\subseteq\delta$. Then there is a cofinality-preserving forcing extension in which $\delta$ remains Woodin and $2^\gamma=F(\gamma)$ for each regular cardinal $\gamma$.
\end{theorem31}

\noindent \textit{Proof.}

Suppose $\delta$ is a Woodin cardinal and $F:\REG\to\CARD$ is an Easton function with $F"\delta\subseteq\delta$. For an ordinal $\alpha$ let $\bar{\alpha}$ denote the least closure point of $F$ greater than $\alpha$. For a regular cardinal $\gamma$, the notation $\Add(\gamma,F(\gamma))$ denotes the poset for adding $F(\gamma)$ Cohen subsets to $\gamma$. The forcing is, the same iteration introduced in \cite{FriedmanHonzik:EastonsTheoremAndLargeCardinals}, that is, an Easton support iteration $\P=\langle (\P_{\eta},\dot{\Q}_{\eta}) : \eta\in\ORD \rangle$ of Easton support products defined as follows. 
\begin{enumerate}
\item[$(1)$] If $\eta$ is an inaccessible closure point of $F$ in $V^{\P_{\eta}}$, then $\dot{\Q}_{\eta}$ is a $\P_{\eta}$-name for the Easton support product $$\Sacks(\eta,F(\eta))\times \prod_{\gamma\in(\eta,\bar{\eta})\cap\REG} \Add(\gamma,F(\gamma))$$
as defined in $V^{\P_{\eta}}$ and $\P_{\eta+1}=\P_{\eta}*\dot{\Q}_{\eta}$
\item[$(2)$] If $\eta$ is a singular closure point of $F$ in $V^{\P_{\eta}}$, then $\dot{\Q}_{\eta}$ is a $\P_{\eta}$-name for $\prod_{\gamma\in[\eta,\bar{\eta})\cap\REG} \Add(\gamma,F(\gamma))$  as defined in $V^{\P_{\eta}}$ and $\P_{\eta+1}=\P_{\eta}*\dot{\Q}_{\eta}$.
\item[$(3)$] Otherwise, if $\eta$ is not a closure point of $F$, then $\dot{\Q}_\eta$ is a $\P_\eta$-name for trivial forcing and $\P_{\eta+1}=\P_\eta*\dot{\Q}_\eta$.
\end{enumerate}

Let $G$ be $V$-generic for $\P$. As in \cite{FriedmanHonzik:EastonsTheoremAndLargeCardinals}, it follows that cardinals are preserved (see \cite[Lemma 3.6]{FriedmanHonzik:EastonsTheoremAndLargeCardinals}) and that for each regular cardinal $\gamma$ one has $2^\gamma=F(\gamma)$ (see \cite[Theorem 3.8]{FriedmanHonzik:EastonsTheoremAndLargeCardinals}). 

Let me now discuss some notation that will be useful for factoring $\P$. If $\eta$ is a closure point of $F$, then one can factor $\P\cong\P_\eta * \dot{\P}_{[\eta,\infty)}$ where $\P_\eta$ denotes the iteration up to stage $\eta$ and $\dot{\P}_{[\eta,\infty)}$ is a $\P_\eta$-name for the remaining stages. Thus $G$ naturally factors as $G\cong G_\eta*G_{[\eta,\infty)}$. The stage $\eta$ forcing in the iteration $\P$ is $\Q_{\eta}$ and I will write $\Q_{\eta}=\Q_{[\eta,\bar{\eta})}$ to emphasize the interval on which the stage $\eta$ forcing has an effect. Let $H_{[\eta,\bar{\eta})}$ denote the $V[G_{\eta}]$-generic for $\Q_{[\eta,\bar{\eta})}$ obtained from $G$. Let $\R_\gamma$ denote a particular factor of the product forcing $\Q_{[\eta,\bar{\eta})}$ so that $\Q_{[\eta,\bar{\eta})}=\prod_{\gamma\in[\eta,\bar{\eta})\cap\REG}\R_\gamma$. In this situation let $H_\gamma$ denote that $V[G_{\eta}]$-generic for $\R_\gamma$ obtained from $G$. In general, if $I\subseteq [\eta,\bar{\eta})$ then let $\Q_I=\prod_{\gamma\in I\cap\REG}\R_\gamma$.

Since $\P_{[\delta,\infty)}$ is ${<}\delta$-closed in $V^{\P_\delta}$, it follows by Lemma \ref{lemmawoodinclosed} that if $\delta$ is Woodin in $V^{\P_\delta}$ then $\delta$ remains Woodin in $V^{\P_\delta*\dot{\P}_{[\delta,\infty)}}$. Thus it will suffice to show that $\delta$ remains Woodin in $V[G_\delta]$. Let me note here that by the previous statements, one could have defined the iteration above so that $\dot{\P}_{[\delta,\infty)}$ is simply a $\P_\delta$-name for an Easton support product of Cohen forcing.

 In what follows I will use the fact that since conditions in $\P_\delta$ have bounded support, one can view them as sequences of length less than $\delta$. Indeed, by cutting off trivial coordinates, one can view a condition $p\in\P_\delta$ as being a condition in some initial segment of the poset.

I will show that property (3) in Lemma \ref{lemmawoodin} holds in $V[G_\delta]$. Suppose $A\subseteq \delta$ with $A\in V[G_\delta]$ and let $\dot{A}$ be a $\P_\delta$-name for $A$. For each $\alpha<\delta$, let $A_\alpha$ be a maximal antichain of conditions in $\P_\delta$ that decide $\check{\alpha}\in\dot{A}$. Define a function $u:\delta\to\delta$ such that 
\begin{align*}
u(\gamma)=\ &\textrm{``the least ordinal $\beta$ such that for each $\alpha<\gamma$,} \\
     &\ \textrm{the antichain $A_\alpha$ is contained in $\P_\beta$''}
\end{align*}
The value of $u(\gamma)$ indicates how much of the generic filter is required to correctly evaluate the name $\dot{A}$ up to $\gamma$. 


Now I will apply the Woodinness of $\delta$ in $V$. By an argument similar to that for Lemma \ref{lemmawoodin}(5), i.e. by coding the name $\dot{A}\subseteq V_\delta$, the Easton funciton $F\cap\delta\times\delta$, and the function $u\subseteq\delta\times\delta$, into a single subset of $\delta$, it follows that there is a $\kappa<\delta$ that is ${<}\delta$-strong for the name $\dot{A}$, the Easton function $F\restrict \delta$, and the function $u$. As an abbreviation, I will say that such a $\kappa$ is ${<}\delta$-strong for $\langle\dot{A},F,u\rangle$. Since $C_F:=\{\alpha<\delta\mid F"\alpha\subseteq\alpha\}$ is a closed unbounded subset of $\delta$ and since the set $S:=\{\kappa<\delta\mid\textrm{$\kappa$ is ${<}\delta$-strong for $\langle \dot{A},F,u\rangle$}\}$ is stationary, one may choose such a $\kappa\in C\cap S$. This is, of course, necessary since there is no hope of $\kappa$ remaining measurable in $V[G_\delta]$ if $\kappa$ is not a closure point of $F$. 

Fix $\kappa<\delta$ such that $\kappa$ is a closure point of $F$ and $\kappa$ is ${<}\delta$-strong for $\langle \dot{A},F,u\rangle$. Fix a function $\ell:\kappa\to\kappa$ as in Lemma \ref{lemmawoodinmenas}. I will show that property (3) in Lemma \ref{lemmawoodin} holds for this $\kappa$ and the initially chosen $A\subseteq \delta$ in $V[G_\delta]$. 

Since the inaccessible closure points of $F$ are unbounded in $\delta$, I may choose $\mu$ to be an inaccessible closure point of $F$ with $F(\kappa)<\mu<\delta$. It will suffice to show that in $V[G_\delta]$ there is an embedding $j:V[G_\delta]\to M[j(G_\delta)]$ with critical point $\kappa$ and $j(A)\cap\mu = A\cap\mu$. Now I will define a singular $\theta>\mu$ and lift an embedding that is $\theta$-strong for $\langle\dot{A},F,u\rangle$. I will also show that the lifted embedding satisfies $j(A)\cap \mu=A\cap\mu$. Using a singular degree of strength is advantageous since this will mean there will be no forcing over $\theta$, and it will follow that the relevant tail forcing will be sufficiently closed. Let $\mu'$ be the least inaccessible closure point of $u$ greater than $\mu$. Define a sequence $\langle\gamma_\alpha\mid\alpha<\kappa^+\rangle$ by recursion as follows. Let $\gamma_0$ be the least inaccessible closure point of $F$ greater than $\mu'$. Assuming $\gamma_\alpha$ is defined where $\alpha<\kappa^+$, let $\gamma_{\alpha+1}$ be the least inaccessible closure point of $F$ greater than $\gamma_\alpha$. At limit stages $\zeta<\kappa^+$, assuming $\langle \gamma_\alpha\mid\alpha<\zeta\rangle$ is defined, let $\gamma_\zeta$ be the least inaccessible closure point of $F$ greater than $\sup\{\gamma_\alpha\mid\alpha<\zeta\}$. Now define $\theta:=\sup\{\gamma_\alpha\mid\alpha<\kappa^+\}$. We have
$$\kappa<F(\kappa)<\mu<\mu'<\gamma_0<\cdots<\gamma_\alpha<\cdots<\theta.$$
For emphasis, let me state the following explicitly.
\begin{itemize}
\item $\langle\gamma_\alpha\mid\alpha<\kappa^+\rangle$ is a discontinuous sequence of inaccessible closure points of $F$.
\item $\theta=\sup\{\gamma_\alpha\mid\alpha<\kappa^+\}$
\item $u"\mu'\subseteq\mu'$
\end{itemize}

By assumption on $\kappa$, there is a $j:V\to M$ with critical point $\kappa$ such that the following hold.
\begin{enumerate}
\item[(1)] $V_\theta\subseteq M$ $\and$ $\theta<j(\kappa)$
\item[(2)] $j(\dot{A})\cap\theta=\dot{A}\cap\theta$ $\and$ $j(F)\restrict\theta=F\restrict\theta$ $\and$ $j(u)\restrict\theta=u\restrict\theta$
\item[(3)] $M=\{j(h)(s)\mid h:V_\kappa\to V, s\in V_\theta, h\in V\}$
\item[(4)] $j(\ell)(\kappa)=\theta$ (using Lemma \ref{lemmawoodinmenas})
\end{enumerate}
Since $j(F)\restrict\theta=F\restrict\theta$, the sequence $\langle \gamma_\alpha\mid\alpha<\kappa^+\rangle$ can be constructed in $M$ from $j(F)$ just as it was constructed in $V$ from $F$. This implies that
\begin{enumerate}
\item[(5)] $\cf(\theta)^M=\kappa^+$.
\end{enumerate}
\noindent Property (5) will be important because it ensures that there is no forcing over $\theta$ in the iteration $j(\P_\delta)$.

\subsection{Lifting $j$ Through $G_\kappa$.}\label{subsectionwoodingkappa}

In order to lift $j$ to $V[G_\kappa]$, I will find an $M$-generic filter $j(G_\kappa)$ for $j(\P_\kappa)$ that satisfies $j"G_\kappa\subseteq j(G_\kappa)$. To do so, the length $j(\kappa)$ iteration $j(\P_\kappa)$ will be factored in $M$. Since $V_\theta\subseteq M$ it follows that $j(\P_\kappa)\cong \P_{\gamma_0}*\dot{\widetilde{\P}}_{[\gamma_0,\theta)}*\dot{\widetilde{\P}}_{[\theta,j(\kappa))}$ where $\dot{\widetilde{\P}}_{[\gamma_0,\theta)}$ is a $\P_{\gamma_0}$-term for the iteration over the interval $[\gamma_0,\theta)$ as defined in $M^{\P_{\gamma_0}}$ and similarly $\dot{\widetilde{\P}}_{[\theta,j(\kappa))}$ is a $\P_{\gamma_0}*\dot{\widetilde{\P}}_{[\gamma_0,\theta))}$-term for the tail of the iteration $j(\P_\kappa)$ as defined in $M^{\P_{\gamma_0}*\dot{\widetilde{\P}}_{[\gamma_0,\theta)}}$. Since $V_\theta\subseteq M$, the iteration $j(\P_\kappa)$ agrees with $\P_\delta$ up to stage $\gamma_0$. Thus it follows that $G_{\gamma_0}$ is $M$-generic for $\P_{\gamma_0}$. Since $\theta$ is singular in $V$, conditions in $\P_{[\gamma_0,\theta)}$ are allowed to have unbounded support. Since $M$ and $V$ do not agree on the collection of unbounded subsets of $\theta$, it follows by a density argument that $G_{[\gamma_0,\theta)}$ is not contained in $\widetilde{\P}_{[\gamma_0,\theta)}$. Nonetheless, Lemmas \ref{lemmapinfty} and \ref{lemmaaut} below will establish that there is an $M[G_{\gamma_0}]$-generic filter, call it $\widetilde{G}_{[\gamma_0,\theta)}$, in $V[G_{\gamma_0}][G_{[\gamma_0,\theta)}]$ for $\widetilde{\P}_{[\gamma_0,\theta)}$. In Lemma \ref{lemmapinfty}, I will show that there is a condition $p_\infty\in \P_{[\gamma_0,\theta)}$ which forces all dense subsets of $\widetilde{\P}_{[\gamma_0,\theta)}$ in $M[G_{\gamma_0}]$ to be met by $G_{[\gamma_0,\theta)}$. It might not be the case that $p_\infty\in G_{[\gamma_0,\theta)}$, but in Lemma \ref{lemmaaut}, I will show that $p_\infty$ is in an automorphic image of $G_{[\gamma_0,\theta)}$, which I shall argue is good enough.

Let me note here that the proof of Lemma \ref{lemmapinfty} resembles the construction of $p_\infty$ in \cite[Sublemma 3.12]{FriedmanHonzik:EastonsTheoremAndLargeCardinals}. However, there is an important difference in that the forcing here, namely $\P_{[\gamma_0,\theta)}$, is an iteration involving generalize Sacks forcing, whereas in \cite{FriedmanHonzik:EastonsTheoremAndLargeCardinals}, the analagous forcing is a product of Cohen forcing.

\begin{lemma}\label{lemmapinfty}
There is a condition $p_\infty\in \P_{[\gamma_0,\theta)}$ such that if $G^*_{[\gamma_0,\theta)}$ is $V[G_{\gamma_0}]$-generic for $\P_{[\gamma_0,\theta)}$ with $p_\infty\in G^*_{[\gamma_0,\theta)}$, then $G^*_{[\gamma_0,\theta)}\cap \widetilde{\P}_{[\gamma_0,\theta)}$ is $M[G_{\gamma_0}]$-generic for $\widetilde{\P}_{[\gamma_0,\theta)}$.
\end{lemma}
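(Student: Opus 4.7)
The plan is to find $p_\infty\in\P_{[\gamma_0,\theta)}$ with the property that every $M[G_{\gamma_0}]$-dense open subset $D$ of $\widetilde{\P}_{[\gamma_0,\theta)}$ is dense in $\P_{[\gamma_0,\theta)}$ below $p_\infty$; then any $V[G_{\gamma_0}]$-generic $G^*$ containing $p_\infty$ must meet each such $D$, so $G^*\cap\widetilde{\P}_{[\gamma_0,\theta)}$ is $M[G_{\gamma_0}]$-generic. Concretely, I would produce $p_\infty$ as a lower bound of a descending sequence $\langle p_\xi\rangle\subseteq\widetilde{\P}_{[\gamma_0,\theta)}$ with each $p_\xi$ in the $\xi$-th enumerated dense open; any generic through $p_\infty$ then contains every $p_\xi$.

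The first ingredient is a count of the $M[G_{\gamma_0}]$-dense opens. Since $M[G_{\gamma_0}]$ is generated over $V[G_{\gamma_0}]$ by the same seeds in $V_\theta$ (Lemma \ref{lemmaseedpreservation}), and since $|V_\theta|^V=\theta$ with $\theta^\kappa=\theta$ under $\GCH$, a standard seed-based counting argument shows that there are at most $\theta$-many dense open subsets of $\widetilde{\P}_{[\gamma_0,\theta)}$ in $M[G_{\gamma_0}]$ as counted in $V[G_{\gamma_0}]$. Fix such an enumeration $\langle D_\xi : \xi<\theta\rangle$ in $V[G_{\gamma_0}]$.

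The construction then proceeds recursively on $\xi<\theta$: at successor steps pick $p_{\xi+1}\leq p_\xi$ with $p_{\xi+1}\in D_\xi$, using density of $D_\xi$ in $\widetilde{\P}_{[\gamma_0,\theta)}$ below $p_\xi\in\widetilde{\P}_{[\gamma_0,\theta)}$; at a limit $\lambda<\theta$, take a lower bound in $\P_{[\gamma_0,\theta)}$ via a generalized fusion argument. For $\lambda\leq\gamma_0$, the ${<}\gamma_0$-closure of the iteration suffices outright (and in particular handles the important cofinality $\kappa^+<\gamma_0$ stages); for $\lambda\in[\gamma_0,\theta)$, one invokes a coordinated generalized Sacks fusion at each $\gamma_\alpha<\lambda$ combined with closure of the intervening Cohen factors, as reviewed in Section 2.4. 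After $\theta$ steps, take $p_\infty$ as the final lower bound; its $V$-Easton support is cofinal in $\theta$ of ordertype $\kappa^+=\cf(\theta)^V$, so $p_\infty$ lies in $\P_{[\gamma_0,\theta)}$ although typically not in $\widetilde{\P}_{[\gamma_0,\theta)}$.

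The main technical obstacle is setting up the generalized fusion so that lower bounds exist at every intermediate limit cofinality up to $\theta$. The analogous \cite[Sublemma 3.12]{FriedmanHonzik:EastonsTheoremAndLargeCardinals} treats a pure Cohen product, where closure alone suffices; the new feature here is the presence of $\Sacks(\gamma_\alpha,F(\gamma_\alpha))$-factors at each inaccessible closure point $\gamma_\alpha\in[\gamma_0,\theta)$, so the bookkeeping must integrate the Sacks fusion levels with the Cohen-product closure coordinately across $[\gamma_0,\theta)$, in the spirit of the generalized fusion for $\Sacks(\kappa,\lambda)$. Once this combined fusion machinery is in place, the verification that $p_\infty$ works is routine.
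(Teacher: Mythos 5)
Your high-level plan (build $p_\infty$ below which every $M[G_{\gamma_0}]$-dense open subset of $\widetilde{\P}_{[\gamma_0,\theta)}$ is automatically met) is the right one, and the count of $\theta$-many dense opens is correct. The gap is in how you propose to thread through them. You enumerate all $\theta$-many dense sets $\langle D_\xi:\xi<\theta\rangle$ and build a descending sequence of length $\theta$, claiming that at limits $\lambda\in[\gamma_0,\theta)$ a ``coordinated generalized Sacks fusion'' supplies lower bounds. This step fails for two independent reasons. First, the generalized fusion for $\Sacks(\gamma_\alpha,F(\gamma_\alpha))$ only produces lower bounds for sequences of length $\gamma_\alpha$ satisfying the rigid constraint $\vec{p}_{\beta+1}\leq_{\beta,X_\beta}\vec{p}_\beta$; when you extend into an arbitrary dense set $D_\xi$ you have no control over how the splitting levels change, so your sequence is simply not a fusion sequence and nothing guarantees a lower bound. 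Second, even if you somehow kept the sequence fusion-like, the fusion at the $\gamma_\alpha$-th Sacks factor is exhausted after $\gamma_\alpha$ steps, while you need $\theta>\gamma_\alpha$ steps. There is also a subtler issue: at a limit $\lambda$ with $\kappa<\cf(\lambda)<\gamma_0$, finding a lower bound inside $\widetilde{\P}_{[\gamma_0,\theta)}$ (which you need, since at the next step you invoke density of $D_\lambda$ in $\widetilde{\P}_{[\gamma_0,\theta)}$) requires the initial segment to lie in $M[G_{\gamma_0}]$, but $M[G_{\gamma_0}]$ is only closed under $\kappa$-sequences in $V[G_{\gamma_0}]$. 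So your recursion stalls long before $\theta$.

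The paper's proof avoids all of this with a compression step that you are missing, and it is the heart of the argument, not ``routine machinery.'' Rather than meet the $D_\xi$ one at a time, Sublemma \ref{sublemmajoel} handles in a single step the entire block of dense sets of the form $j(h)(a)^{G_{\gamma_0}}$ for a fixed $h\in V$ and all seeds $a\in V_\beta$. The mechanism is Sublemma \ref{sublemmapd}: choose $\gamma_\alpha>|V_\beta|$, factor $\widetilde{\P}_{[\gamma_0,\theta)}\cong\P_{[\gamma_0,\gamma_\alpha)}*\dot{\widetilde{\P}}_{[\gamma_\alpha,\theta)}$ with the tail ${<}\gamma_\alpha$-closed, and for each $D^h_\xi$ in the block produce an $\R$-name $\dot q_\xi$ together with a reduced set $\bar D^h_\xi$ that is dense in $\R=\P_{[\gamma_0,\gamma_\alpha)}$ below $r_*$; the descending sequence is taken only in the tail coordinate and has length $\leq|V_\beta|<\gamma_\alpha$, so closure there absorbs it. Since $\R$ is the same poset in $V[G_{\gamma_0}]$ and $M[G_{\gamma_0}]$ (as $V_\theta\subseteq M$), $V$-genericity for $\R$ automatically meets the $\bar D^h_\xi$, hence the $D^h_\xi$. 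This reduces the whole construction to a descending sequence of length $\kappa^+$ indexed by pairs $(f_\xi,\gamma_\alpha)$, and since $\kappa^+<\gamma_0$, the ${<}\gamma_0$-closure of $\widetilde{\P}_{[\gamma_0,\theta)}$ in $M[G_{\gamma_0}]$ together with $M[G_{\gamma_0}]^\kappa\subseteq M[G_{\gamma_0}]$ suffices at every limit stage, and ${<}\gamma_0$-closure of $\P_{[\gamma_0,\theta)}$ in $V[G_{\gamma_0}]$ gives the final lower bound $p_\infty$.
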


\begin{proof}

By our choice of $\theta$, the sequence $\langle \gamma_\alpha\mid \alpha<\kappa^+\rangle$ is an increasing cofinal sequence of inaccessible closure points of $F$ in $\theta$. Recall the placement of the following ordinals.
$$\mu<\mu'<\gamma_0<\gamma_1<\cdots<\gamma_\alpha<\cdots<\theta$$
It follows that, in $M[G_{\gamma_0}]$, for each $\alpha<\kappa^+$,
$$\widetilde{\P}_{[\gamma_0,\theta)}\cong\P_{[\gamma_0,\gamma_\alpha)}*\dot{\widetilde{\P}}_{[\gamma_\alpha,\theta)}$$
where $\P_{[\gamma_0,\gamma_\alpha)}$ is $\gamma_\alpha^+$-c.c. in $V[G_{\gamma_0}]$ and $\widetilde{\P}_{[\gamma_\alpha,\theta)}$ is forced to be ${<}\gamma_\alpha$-closed.

A few sublemmas will be required.

\begin{sublemma}\label{sublemmapd}
Suppose $p_*=(r_*,\dot{q}_*)\in\R*\dot{\Q}$ and $D\subseteq \R*\dot{\Q}$ is open dense. Then there is an $\R$-name $\dot{q}_D$ such that the following hold.
\begin{enumerate}
\item[$(1)$] $(r_*,\dot{q}_D)\leq(r_*,\dot{q}_*)$
\item[$(2)$] $\bar{D}=\{r\leq r_*\mid(r,\dot{q}_D)\in D\}$ is open dense in $\R$ below $r_*$.
\item[$(3)$] $r_*\forces_\R \exists r\in\dot{G}\ (r,\dot{q}_D)\in D$
\end{enumerate}
\end{sublemma}

\begin{proof} I will work below $(r_*,\dot{q}_*)$. Choose $(r_0,\dot{q}_0)\leq (r_*,\dot{q}_*)$ with $(r_0,\dot{q}_0)\in D$. Let $r_0'\leq r$  with $r_0'\perp r_0$. Now let $(r_1,\dot{q}_1)\leq (r_0',\dot{q}_*)$ with $(r_1,\dot{q}_1)\in D$. Proceed by induction.

If $\alpha$ is a successor ordinal, say $\alpha=\beta+1$, choose $r_\beta'\leq r_*$ with $r_\beta'\perp\{r_\xi\mid\xi\leq\beta\}$. Let $(r_{\beta+1},\dot{q}_{\beta+1})\in D$ with $(r_{\beta+1},\dot{q}_{\beta+1})\leq (r_\beta',\dot{q}_*)$.

If $\alpha$ is a limit ordinal, suppose $\{r_\xi\mid\xi<\alpha\}$ is the antichain of $\R$ constructed so far. Let $r_\alpha''\in\R$ be such that $r''_\alpha\perp\{r_\xi\mid\xi<\alpha\}$. Let $(r_\alpha,\dot{q}_\alpha)\in D$ with $(r_\alpha,\dot{q}_\alpha)\leq (r_\alpha'',\dot{q}_*)$.

The process terminates at some stage $\gamma$ once $A:=\{r_\xi\mid\xi<\gamma\}$ forms a maximal antichain of $\R$ below $r_*$. Let $\dot{q}_D$ be the $\R$-name obtained by mixing the names $\dot{q}_\xi$, defined above, over $A$. In other words, $\dot{q}_D$ has the property that for each $\xi<\gamma$ the condition $r_\xi$ forces $\dot{q}_D=\dot{q}_\xi$.

Let me show that (1) holds. Any generic for $\R$ containing $r_*$ will contain $r_\xi$ for some $\xi<\gamma$. Since $r_\xi\forces\dot{q}_D=\dot{q}_\xi$ and $(r_\xi,\dot{q}_\xi)\leq (r_*,\dot{q}_*)$, it follows that $r_\xi\forces \dot{q}_D=\dot{q}_\xi \leq\dot{q}_*$. Hence $r_*\forces \dot{q}_D\leq\dot{q}_*$.


I will now show that (2) holds. Since $D$ is open it easily follows that $\bar{D}$ is open. Suppose $p\leq r_*$ with $p\in \R$. Since $A$ is a maximal antichain of $\R$ below $r_*$ the condition $p$ is compatible with some $r_\xi\in A$. Thus, let $s\in \R$ with $s\leq r_\xi$ and $s\leq p$. Since $(r_\xi,\dot{q}_\xi)\in D$ and $D$ is open dense, to show that $s\in \bar{D}$ it will suffice to show that $(s,\dot{q}_D)\leq (r_\xi,\dot{q}_\xi)$. This easily follows since $s\leq r_\xi$ and $r_\xi\forces \dot{q}_D=\dot{q}_\xi$ imply that $s\forces \dot{q}_D\leq\dot{q}_\xi$.
\end{proof}

\begin{sublemma}\label{sublemmajoel}
Suppose $q\in\widetilde{\P}_{[\gamma_0,\theta)}$. For all functions $h\in V$ with $\dom(h)=V_\kappa$ and all $\beta<\theta$ there is a $p\leq q$ with $p\in\widetilde{\P}_{[\gamma_0,\theta)}$ such that if $p\in G^*_{[\gamma_0,\theta)}$ is $V[G_{\gamma_0}]$-generic for $\P_{[\gamma_0,\theta)}$, then $G^*_{[\gamma_0,\theta)}$ meets every dense subset of $\widetilde{\P}_{[\gamma_0,\theta)}$ of the form $j(h)(a)^{G_{\gamma_0}}$ where $a\in V_\beta$.
\end{sublemma}

\begin{proof}
Fix $q\in\widetilde{\P}_{[\gamma_0,\theta)}$, a function $h$, and $\beta$ as in the statement of the sublemma. I will obtain the condition $p\leq q$ as a lower bound of a descending sequence of conditions in $\widetilde{\P}_{[\gamma_0,\theta)}$. Since $\langle\gamma_\alpha\mid\alpha<\kappa^+\rangle$ is cofinal in $\theta$, one may choose $\gamma_\alpha>|V_\beta|$. It follows that there is an enumeration $\vec{D}=\langle D^h_\xi\mid\xi<\zeta\rangle$, in $M[G_{\gamma_0}]$, of all dense subsets of $\widetilde{\P}_{[\gamma_0,\theta)}$ of the form $j(h)(a)^{G_{\gamma_0}}$ with $a\in V_\beta$. Clearly one has $\zeta\leq|V_\beta|<\gamma_\alpha$. Factor $\widetilde{\P}_{[\gamma_0,\theta)}$ as $\widetilde{\P}_{[\gamma_0,\theta)}\cong\P_{[\gamma_0,\gamma_{\alpha})} * \dot{\widetilde{\P}}_{[\gamma_{\alpha},\theta)}$. In order to simplify notation, let me define $\R:=\P_{[\gamma_0,\gamma_{\alpha})}$ and $\dot{\Q}:=\dot{\widetilde{\P}}_{[\gamma_{\alpha},\theta)}$, so that $\widetilde{\P}_{[\gamma_0,\theta)}\cong \R*\dot{\Q}$. Note that $\forced_\R$ ``$\dot{\Q}$ is ${<}\gamma_{\alpha}$-closed.'' Since $q\in\widetilde{\P}_{[\gamma_0,\theta)}\cong \R*\dot{\Q}$ one may write $q=(r_*,\dot{q}_*)$ where $r_*=q\restrict[\gamma_0,\gamma_{\alpha})\in\R$ and $\dot{q}_*$ denotes the $\R$-name, $q\restrict[\gamma_{\alpha},\theta)$.

By the repeated application of Sublemma \ref{sublemmapd}, and using the fact that $\forced_\R$ ``$\dot{\Q}$ is ${<}\gamma_{\alpha}$-closed,'' one may build a descending sequence of conditions $\langle (r_*,\dot{q}_\xi)\mid\xi\leq\zeta\rangle$ in $\R*\dot{\Q}$ such that for each $\xi\leq\zeta$, the set 
$$\bar{D}^h_\xi:=\{r\leq r_*\mid (r,\dot{q}_\xi)\in D_\xi\}$$ 
is dense below $r^*$ in $\R=\P_{[\gamma_0,\gamma_{\alpha})}$. Let $p:=(r_*,\dot{q}_\zeta)$.

Suppose $p\in G^*_{[\gamma_0,\theta)}$ is $V[G_{\gamma_0}]$-generic for $\P_{[\gamma_0,\theta)}$. Fix an $a\in V_\beta$ such that $j(h)(a)^{G_{\gamma_0}}$ is a dense subset of $\widetilde{\P}_{[\gamma_0,\theta)}$. Since $j(h)(a)^{G_{\gamma_0}}$ must appear on the enumeration of dense sets we fixed above, there is a $\xi<\zeta$ such that $D^h_\xi=j(h)(a)^{G_{\gamma_0}}$. Since $\bar{D}_\xi$ is dense below $r^*$ in $\R=\P_{[\gamma_0,\gamma_{\alpha})}$ there is a condition $r\in G^*_{[\gamma_0,\gamma_{\alpha})}\cap\bar{D}_\xi$. By definition of $\bar{D}_\xi$, it follows that $(r,\dot{q}_\xi)\in D_\xi^{h}$. By padding $r$ with $\1$'s, one sees that there is an $\R$-name $\dot{b}$ such that $(r,\dot{b})\in G^*_{[\gamma_0,\theta)}$. Since $p=(r_*,\dot{q}_{\zeta})$ and $(r,\dot{b})$ are both in $G^*_{[\gamma_0,\theta)}$ they have a common extension $(r',\dot{q}')\in G^*_{[\gamma_0,\theta)}$. Since $(r',\dot{q}')\leq (r,\dot{q}_{\zeta})$, and since $r_*\forces \dot{q}_\zeta\leq\dot{q}_\xi$, it follows that $(r',\dot{q}')\leq (r,\dot{q}_\xi)$. Since $G^*_{[\gamma_0,\theta)}$ is a filter, one concludes that $(r,\dot{q}_\xi)\in G^*_{[\gamma_0,\theta)}\cap D^{h}_\xi$.
\end{proof}

Continuing with the proof of Lemma \ref{lemmapinfty}, I will now use Sublemma \ref{sublemmajoel} to construct the condition $p_\infty\in\P_{[\gamma_0,\theta)}$. Let $\langle f_\xi\mid\xi<\kappa^+\rangle\in V$ be a sequence of functions with domain $V_\kappa$ such that every dense subset of $\widetilde{\P}_{[\gamma_0,\theta)}$ in $M[G_{\gamma_0}]$ has a name of the form $j(f_\xi)(a)$ for some $\xi<\kappa^+$ and some $a\in V_\theta$. Let ${w}:\kappa^+\to\kappa^+\times\kappa^+$ be a bijection. It follows that ${w}\in M[G_{\gamma_0}]$ since ${w}\in V_\theta$. For each $\alpha<\kappa^+$ let ${w}(\alpha)=({w}(\alpha)_0,{w}(\alpha)_1)$. The function ${w}$ provides a well-ordering of pairs of the form $(f_\xi,\gamma_\alpha)$. Notice that the well-ordering is not in $M[G_{\gamma_0}]$ since the sequence $\langle f_\xi\mid\xi<\kappa^+\rangle$ is not in $M[G_{\gamma_0}]$. I will use this well-ordering of all pairs of the form $(f_\xi,\gamma_\alpha)$ of order type $\kappa^+$ to build a descending sequence of conditions $\langle p_\beta\mid\beta<\kappa^+\rangle$ in $V[G_{\gamma_0}]$ with $p_\beta\in \widetilde{\P}_{[\gamma_0,\theta)}$ such that if $p_\beta\in G^*_{[\gamma_0,\theta)}$ is $V[G_{\gamma_0}]$-generic for $\P_{[\gamma_0,\theta)}$, then $G^*_{[\gamma_0,\theta)}$ meets $D^{f_\xi}_a=j(f_\xi)(a)_{G_{\gamma_0}}$ for each $a\in V_{\gamma_\alpha}$ where ${w}(\beta)=(\xi,\alpha)$. Since the above mentioned well-ordering will not be in $M[G_{\gamma_0}]$, I will need the next lemma to build the descending sequence.

\begin{lemma}
The model $M[G_{\gamma_0}]$ is closed under $\kappa$-sequences in $V[G_{\gamma_0}]$.
\end{lemma}

\begin{proof}
Since $\P_\kappa$ is $\kappa$-c.c. in $V$ (by \cite[Theorem 16.30]{Jech:Book}), it follows that $M[G_\kappa]^\kappa\subseteq M[G_\kappa]$ in $V[G_\kappa]$. By Lemma \ref{lemmaclosuresacks} it follows that $M[G_\kappa][H_\kappa]^\kappa\subseteq M[G_\kappa][H_\kappa]$ in $V[G_\kappa][H_\kappa]$. Since the remaining forcing $\Q_{[\kappa^+,\bar{\kappa})}*\P_{[\bar{\kappa},\gamma_0)}$ is ${\leq}\kappa$-distributive in $V[G_\kappa][H_\kappa]$ (by Lemma \ref{lemmaeastonforsacks}) it follows that $M[G_{\gamma_0}]^\kappa\subseteq M[G_{\gamma_0}]$ in $V[G_{\gamma_0}]$.
\end{proof}

I will now use the bijection $w:\kappa^+\to\kappa^+\times\kappa^+$ defined above to build the descending sequence. Let $p_0$ be the condition obtained by applying Sublemma \ref{sublemmajoel} below the trivial condition to the function $h=f_{\xi}$ where $\xi={w}(0)_0$ and to the ordinal $\beta=\gamma_\alpha$ where $\alpha={w}(0)_1$. For successor stages, assume that $\langle p_\eta\mid\eta\leq\zeta\rangle$ has been constructed, where $\zeta<\kappa^+$. Let $p_{\zeta+1}\in\widetilde{\P}_{[\gamma_0,\theta)}$ be obtained by applying Sublemma \ref{sublemmajoel} below $p_\zeta$ to the function $h=f_\xi$ where $\xi={w}(\zeta+1)_0$ and to the ordinal $\beta=\gamma_\alpha$ where $\alpha={w}(\zeta+1)_1$. At limit stages $\zeta<\kappa^+$, assume $\langle p_\eta\mid\eta<\zeta\rangle$ has been constructed. The fact that $M[G_{\gamma_0}]^\kappa\subseteq M[G_{\gamma_0}]$ implies that the sequence $\langle p_\eta\mid\eta<\zeta\rangle$ is in $M[G_{\gamma_0}]$ since it has been constructed from an initial segment of $\langle f_\xi\mid\xi<\kappa^+\rangle$ and from $\langle \gamma_\alpha\mid\alpha<\kappa^+\rangle\in M[G_{\gamma_0}]$. Since $\widetilde{\P}_{[\gamma_0,\theta)}$ is ${<}\gamma_0$-closed in $M[G_{\gamma_0}]$, one may let $p_\zeta'\in \widetilde{\P}_{[\gamma_0,\theta)}$ be a lower bound of $\langle p_\beta\mid\beta<\zeta\rangle$. Now let $p_\zeta$ be obtained by applying Sublemma \ref{sublemmajoel} below $p_\zeta'$ to the function $f_\xi$ where $\xi={w}(\zeta)_0$ and the ordinal $\beta=\gamma_\alpha$ where $\alpha={w}(\zeta)_1$.

This defines the sequence $\langle p_\eta\mid\eta<\kappa^+\rangle$ in $V[G_{\gamma_0}]$ where $p_\eta\in\widetilde{\P}_{[\gamma_0,\theta)}\subseteq\P_{[\gamma_0,\theta)}$ for each $\eta<\kappa^+$. Let $p_\infty\in\P_{[\gamma_0,\theta)}$ be a lower bound of $\langle p_\eta\mid\eta<\kappa^+\rangle$.

Suppose $p_\infty\in G^*_{[\gamma_0,\theta)}$ is $V[G_{\gamma_0}]$-generic for $\P_{[\gamma_0,\theta)}$. Suppose $D\in M[G_{\gamma_0}]$ is a dense subset of $\widetilde{\P}_{[\gamma_0,\theta)}$. Then $D=D^{f_\xi}_a=j(f_\xi)(a)^{G_{\gamma_0}}$ for some $\xi<\kappa^+$ and where $a\in V_{\gamma_\alpha}$ for some $\alpha<\kappa^+$. Let $\zeta<\kappa^+$ with ${w}(\zeta)=({w}(\zeta)_0,{w}(\zeta)_1)=(\xi,\alpha)$. Since $p_\infty\leq p_\zeta$, it follows that $p_\zeta\in G^*_{[\gamma_0,\theta)}$ and hence, $G^*_{[\gamma_0,\theta)}$ meets $D^{f_\xi}_a$, by Sublemma \ref{sublemmajoel}.

This concludes the proof of Lemma \ref{lemmapinfty}. \hfill $\Box$

\bigskip

I will now show that there is an automorphic image of $G_{[\gamma_0,\theta)}$ containing $p_\infty$.

\begin{lemma}\label{lemmaaut}
Suppose $c\in \P_{[\gamma_0,\theta)}$. There is an automorphism $\pi:\P_{[\gamma_0,\theta)}\to\P_{[\gamma_0,\theta)}$ in $V[G_{\gamma_0}]$ such that $c\in\pi"G_{[\gamma_0,\theta)}$. 
\end{lemma}

\begin{proof}

Working in $V[G_{\gamma_0}]$, I claim each stage in the iteration $\P_{[\gamma_0,\theta)}$ is forced to be homogeneous over the previous stages. Let me argue that the Easton support product
$$\Q_\eta:=\Sacks(\eta,F(\eta))\times \prod_{\gamma\in(\eta,\bar{\eta})\cap\REG} \Add(\gamma,F(\gamma))$$
as defined in $V[G_\eta']$ is almost homogeneous in $V[G_\eta']$ where $G_\eta'$ is any generic for $\P_\eta$. It will suffice to argue that each factor in the product $\Q_\eta$ is almost homogeneous since automorphisms of each coordinate can be combined to give an automorphism of the product. Clearly, each factor of Cohen forcing $\Add(\gamma,F(\gamma))$ is almost homogeneouss. If $p,q\in\Sacks(\eta,F(\eta))$ let $f$ be an automorphism of $\Sacks(\eta,F(\eta))$ such that the support of $f(p)$ is disjoint from the support of $q$. Then $f(p)$ is compatible with $q$.
 
By Lemma \ref{lemmahomogeneousiteration}, to show that $\P_{[\gamma_0,\theta)}$ is almost homogeneous in $V[G_{\gamma_0}]$, it will suffice to show that at each stage $\eta\in[\gamma_0,\theta)$, the name $\dot{\Q}_\eta$ is a symmetric $\P_\eta$-name for the stage $\eta$ forcing. Let me fix an automorphism $f$ of $\P_{[\gamma_0,\eta)}$ and argue that $\forced_{\P_{[\gamma_0,\eta)}}f(\dot{\Q}_\eta)=\dot{\Q}_\eta$. There is a first order formula $\varphi(x_0,\ldots,x_n)$ such that $\forces_{\P_{[\gamma_0,\eta)}}$ ``$\forall x$ [$x\in\dot{\Q}_\eta$ if and only if $\varphi(x,\check{a}_1,\ldots,\check{a}_n)$]'' where $a_1,\ldots,a_n$ are elements of the ground model $V[G_{\eta_0}]$. Applying $f$ to the previous statement one obtains $\forces_{\P_{[\gamma_0,\eta)}}$ ``$\forall x$ [$x\in f(\dot{\Q}_\eta)$ if and only if $\varphi(x,\check{a}_1,\ldots,\check{a}_n)$].'' Thus, if $\dot{x}$ is a $\P_{[\gamma_0,\eta)}$-name in $V[G_{\gamma_0}]$, it follows that
$$\forces_{\P_\eta} \dot{x}\in\dot{\Q}_\eta \longleftrightarrow \varphi(\dot{x},\check{a}_1,\ldots,\check{a}_n)\longleftrightarrow \dot{x}\in f(\dot{\Q}_\eta)$$
and hence $\forces_{\P_\eta}\dot{\Q}_\eta= f(\dot{\Q}_\eta)$. Applying Lemma \ref{lemmahomogeneousiteration}, one concludes that $\P_{[\gamma_0,\theta)}$ is almost homogeneous in $V[G_{\gamma_0}]$. 

Now it follows by an easy density argument that every condition $p\in\P_{[\gamma_0,\theta)}$ can be extended to a condition $q\leq p$ such that there is an $f\in\Aut(\P_{[\gamma_0,\theta)})$ with $f(q)\leq c$. Therefore, by the genericity of $G_{[\gamma_0,\theta)}$, there is such a $q\in G_{[\gamma_0,\theta)}$ with such an $f\in\Aut(\P_{[\gamma_0,\theta)})$. Let $\pi:=f$. Since $\pi"G_{[\gamma_0,\theta)}$ is a filter and $\pi(q)\leq c$, it follows that $c\in \pi"G_{[\gamma_0,\theta)}$.
\end{proof}

\bigskip

As discussed above, one may use Lemmas \ref{lemmapinfty} and \ref{lemmaaut} to obtain $\widetilde{G}_{[\gamma_0,\theta)}\in V[G_{\gamma_0}][G_{[\gamma_0,\theta)}]$, an $M[G_{[\gamma_0,\theta)}]$-generic for $\widetilde{\P}_{[\gamma_0,\theta)}$. 

To finish lifting $j$ through $j(\P_\kappa)\cong \P_{\gamma_0}*\dot{\widetilde{\P}}_{[\gamma_0,\theta)}*\dot{\widetilde{\P}}_{[\theta,j(\kappa))}$, I will build an $M[G_{\gamma_0}][\widetilde{G}_{[\gamma_0,\theta)}]$-generic for $\widetilde{\P}_{[\theta,j(\kappa))}$ in $V[G_{\gamma_0}][G_{[\gamma_0,\theta)}]$. The following lemma will be required.

\begin{lemma}
$M[G_{\gamma_0}][\widetilde{G}_{[\gamma_0,\theta)}]$ is closed under $\kappa$-sequences in $V[G_{\gamma_0}][G_{[\gamma_0,\theta)}]$.
\end{lemma}
\begin{proof}
Since $\P_\kappa$ is $\kappa$-c.c., it follows by Lemma \ref{lemmachain} that $M[G_\kappa]$ is closed under $\kappa$-sequences in $V[G_\kappa]$. It is shown in \cite[Lemma 3.14]{FriedmanHonzik:EastonsTheoremAndLargeCardinals} and \cite[Lemma 3]{FriedmanThompson:PerfectTreesAndElementaryEmbeddings}, using a fusion argument, that $M[G_\kappa][H_\kappa]$ is closed under $\kappa$-sequences in $V[G_\kappa][H_\kappa]$. It will suffice to show that $M[G_{\gamma_0}][\widetilde{G}_{[\gamma_0,\theta)}]$ has every $\kappa$-sequence of ordinals in $V[G_{\gamma_0}][G_{[\gamma_0,\theta)}]$. Suppose $\vec{x}$ is a $\kappa$-sequence of ordinals in $V[G_{\gamma_0}][G_{[\gamma_0,\theta)}]$. Then since $\Q_{[\kappa^+,\bar{\kappa})}*\P_{[\bar{\kappa},\theta)}$ is ${\leq}\kappa$-distributive in $V[G_\kappa][H_\kappa]$, it follows that $\vec{x}\in V[G_\kappa][H_\kappa]$. Thus $\vec{x}\in M[G_\kappa][H_\kappa]\subseteq M[G_{\gamma_0}][\widetilde{G}_{[\gamma_0,\theta)}]$.
\end{proof}

Suppose $D$ is a dense subset of $\widetilde{\P}_{[\theta,j(\kappa))}$ in $M[G_{\gamma_0}][\widetilde{G}_{[\gamma_0,\theta)}]$. Let $\dot{D}\in M$ be a nice $\P_\theta$-name for $D$. Let $h$ be a function in $V$ with $\dom(h)=V_\kappa$ and $s\in V_\theta$ with $\dot{D}=j(h)(s)$. Without loss of generality, assume that $\ran(h)$ is contained in the set of nice names for dense subsets of a particular tail of $\P$. Since $\theta$ is singular, $\widetilde{\P}_{[\theta,j(\kappa))}$ is $\leq\theta$-closed in $M[G_{\gamma_0}][\widetilde{G}_{[\gamma_0,\theta)}]$. The collection $\mathcal{D}:=\{ j(h)(s)_{G_{\gamma_0}*\widetilde{G}_{[\gamma_0,\theta)}}\mid s\in V_\theta\}$ is in $M[G_{\gamma_0}][\widetilde{G}_{[\gamma_0,\theta)}]$. Since $\theta$ is a $\beth$-fixed point, there are at most $\theta$ dense subsets of $\widetilde{\P}_{[\theta,j(\kappa))}$ in $\mathcal{D}$. Thus, there is a single condition in $\widetilde{\P}_{[\theta,j(\kappa))}$ that meets every dense set in $\mathcal{D}$. Since there are at most $\kappa^+$ functions from $V_\kappa$ to nice names for dense subsets of a tail of $\P_\kappa$, and since every dense subset of $\widetilde{\P}_{[\theta,j(\kappa))}$ has a name in $M$ which is represented by such a function, the above procedure can be iterated to obtain a descending $\kappa^{+}$-sequence of conditions in $\widetilde{\P}_{[\theta,j(\kappa))}$ meeting every dense subset of $\widetilde{\P}_{[\theta,j(\kappa))}$ in $M[G_{\gamma_0}][\widetilde{G}_{[\gamma_0,\theta)}]$. Let $\widetilde{G}_{tail}$ be the $M[G_{\gamma_0}][\widetilde{G}_{[\gamma_0,\theta)}]$-generic filter for $\P_{tail}$ generated by this sequence.

Now let $j(G_\kappa):=G_{\gamma_0}*\widetilde{G}_{[\gamma_0,\theta)}*\widetilde{G}_{tail}$ and note that $j"G_\kappa\subseteq j(G_\kappa)$ since conditions in $G_\kappa$ have support bounded below the critical point of $j$. Hence by Lemma \ref{lemmaliftingcriterion}, the embedding lifts to 
$$j:V[G_\kappa]\to M[j(G_\kappa)]$$
in $V[G_{\gamma_0}][G_{[\gamma_0,\theta)}]$.

\subsection{Lifting $j$ Through $\Sacks(\kappa,F(\kappa))$.}

It remains to show that the embedding lifts further through the forcing $\P_{[\kappa,\lambda)}$. I will now argue that $j$ lifts through $\R_\kappa=\Sacks(\kappa,F(\kappa))^{V[G_\kappa]}$, the first factor of the stage $\kappa$ forcing. I will use the tuning fork method of \cite{FriedmanThompson:PerfectTreesAndElementaryEmbeddings} to construct an $M[j(G_\kappa)]$-generic for $j(\R_\kappa)=\Sacks(j(\kappa),j(F(\kappa)))^{M[j(G_\kappa)]}$ in $V[G_\kappa][H_\kappa]$ that satisfies the lifting criterion in Lemma \ref{lemmaliftingcriterion}. Say that $t\subseteq 2^{<j(\kappa)}$ is a \emph{tuning fork that splits at $\kappa$} if and only if $t=t^0\cup t^1$ where $t^0$ and $t^1$ are two distinct cofinal branches of $2^{<j(\kappa)}$ such that $t^0\cap\kappa=t^1\cap\kappa$, $t^0(\kappa)=0$, and $t^1(\kappa)=1$. For $\alpha<j(F(\kappa))$ let 
$$t_\alpha:=\bigcap\{j(p)(\alpha)\mid p\in H_\kappa\}.$$
The next lemma is key.
\begin{lemma}\label{lemmatuningfork}
If $\alpha\in j"F(\alpha)$ then $t_\alpha$ is a tuning fork that splits at $\kappa$. Otherwise, if $\alpha<j(F(\kappa))$ is not in the range of $j$, then $t_\alpha$ is a cofinal branch through $2^{<j(\kappa)}$. 
\end{lemma}

\begin{proof}
The following proof follows \cite{FriedmanThompson:PerfectTreesAndElementaryEmbeddings} closely, except that here Lemma \ref{lemmawoodinmenas} is required. Working in $V[G_\kappa]$, let 
$$X:=\bigcap \{j(C)\mid \textrm{$C\subseteq\kappa$ is club and $C\in V$}\}.$$
First let me show that $X=\{\kappa\}$. If $\alpha<\kappa$ then clearly $\alpha\notin X$ since there is a closed unbounded subset $C$ of $\kappa$ whose least element is greater than $\alpha$, and thus $\alpha\notin j(C)$. Since the limit cardinals below $\kappa$ form a closed unbounded subset of $\kappa$ it follows that any element of $X$ must be a limit cardinal in $M[j(G_\kappa)]$ which is greater than or equal to $\kappa$. Suppose $\lambda<j(\kappa)$ is a limit cardinal and $\lambda>\theta$. Then $\lambda=j(h)(a)$ for some function $h:V_\kappa\to \kappa$ in $V[G_\kappa]$ and some $a\in V_\theta$. Let $C_h:=\{\gamma<\kappa\mid\textrm{$\gamma$ is a limit cardinal and $h"V_\gamma\subseteq\gamma$}\}$. Then $C_h$ is a closed unbounded subset of $\kappa$ and $\lambda\notin j(C_h)$ since $\lambda>\theta$ and $j(h)"V_\lambda\not\subseteq\lambda$. Now suppose $\kappa<\lambda\leq\theta$. Above, the function $\ell$ is chosen using Lemma \ref{lemmawoodinmenas} so that $\ell:\kappa\to\kappa$ and $j(\ell)(\kappa)=\theta$. Then $C_\ell:=\{\gamma<\kappa\mid\textrm{$\ell"\gamma\subseteq\gamma$}\}$ is a closed unbounded subset of $\kappa$ in $V[G_\kappa]$ and $\lambda\notin j(C_\ell)$ since $\theta\in j(\ell)"\lambda$ and this implies $j(\ell)"\lambda\not\subseteq\lambda$. This shows that $X\subseteq\{\kappa\}$. Clearly $\kappa\in X$ since for each closed unbounded $C\subseteq\kappa$ in $V[G_\kappa]$, $j(C)\cap\kappa=C$.

The rest of the proof is exactly as in \cite{FriedmanThompson:PerfectTreesAndElementaryEmbeddings} and \cite{FriedmanHonzik:EastonsTheoremAndLargeCardinals}.

Let $C$ be any closed unbounded subset of $\kappa$ in $V[G_\kappa]$. Choose $\alpha<j(F(\kappa))$ and write $\alpha=j(f)(a)$ where $f:V_\kappa\to 	F(\kappa)$ and $a\in V_\theta$. It is easy to show that the following set is dense in $\Sacks(\kappa,F(\kappa))$.
$$D_C=\{p\in\Sacks(\kappa,F(\kappa))\mid\xi\in\ran(f)\implies C(p(\xi))\subseteq C\}$$
Thus there is a $p\in H_\kappa\cap D_C$ with $C(j(p)(\alpha))\subseteq j(C)$. Since $C$ was an arbitrary closed unbounded subset of $\kappa$, this, together with the fact that $X=\{\kappa\}$, implies that $t_\alpha$ can only possibly split at $\kappa$. If $\alpha\in\ran(j)$ then since $\kappa$ is a limit point of $j(C)$ for every closed unbounded $C\subseteq\kappa$ in $V[G_\kappa]$, it follows that $t_\alpha$ splits at $\kappa$ and is a tuning fork. 

If $\alpha\notin\ran(j)$ then $\ran(f)$ must have size $\kappa$ since otherwise $\alpha\in j(\ran(f))= j"\ran(f)$. Let $\langle \bar{\alpha}_i\mid i<\kappa\rangle$ enumerate $\ran(f)$. Then $j(\langle \bar{\alpha}_i\mid i<\kappa\rangle)=\langle \alpha_i\mid i<j(\kappa)\rangle$ in an enumeration of $\ran(j(f))$. It is easy to see that the set of conditions $p\in\Sacks(\kappa,F(\kappa))$ such that for each $i<\kappa$, the least splitting level of $p(\bar{\alpha}_i)$ is above level $i$ is dense. Thus there is a $p\in H_\kappa$ such that for each $i<j(\kappa)$ the least splitting level of $j(p)(\alpha_i)$ is beyond level $i$. Since $\alpha\notin\ran(j)$ it follows that $\alpha=\alpha_i$ for some $i\in[\kappa,j(\kappa))$. It follows that the first splitting level of $j(p)(\alpha)$ is above $\kappa$. Thus, $t_\alpha$ is a cofinal branch.
\end{proof}

Each $t_\alpha$ generates an $M[j(G_\kappa)]$-generic filter for $j(\R_\kappa)$ as follows. For $\alpha\in j"F(\kappa)$, let $t_\alpha^0$ and $t_\alpha^1$ be the left-most and right-most branches of $t_\alpha$ respectively; that is, for $k\in\{0,1\}$ let
$$t^k_\alpha:=\{s\in t_\alpha\mid \kappa\in\dom(s)\implies s(\kappa)=k\}.$$
For $\alpha<j(F(\kappa))$ not in the range of $j$, let $t^0_\alpha:=t_\alpha$ be the cofinal branch in Lemma \ref{lemmatuningfork}.
Let 
$$g:=\{\widetilde{p}\in j(\R_\kappa)\mid \forall \alpha<j(F(\kappa))\ t^0_\alpha\subseteq \widetilde{p}(\alpha)\}.$$
It is easy to check that $j"H_\kappa\subseteq g$, so to show that $j$ lifts through $\R_\kappa$ it remains to show that $g$ is $M[j(G_\kappa)]$-generic for $j(\R_\kappa)$. For this the following two definitions will be used, both of which are given in \cite{FriedmanThompson:PerfectTreesAndElementaryEmbeddings}. Suppose $ p\in \Sacks(\kappa,F(\kappa))^{V[G_\kappa]}$, $S\subseteq F(\kappa)$ with $|S|^{V[G_\kappa]}<\kappa$. Friedman and Thompson say that an \emph{$(S,\alpha)$-thinning of $ p$} is an extension of $ p$ obtained by thinning each $ p(\xi)$ for $\xi\in S$ to the subtree 
$$ p(\xi)\restrict s_\xi:=\{s\in p(\xi)\mid s_\xi\subseteq s\textrm{ or } s\subseteq s_\xi\}$$
where $s_\xi$ is some particular node of $p(\xi)$ on the $\alpha$-th splitting level of $p(\xi)$. A condition $p\in \Sacks(\kappa, F(\kappa))^{V[G_\kappa]}$ is said to \emph{reduce} a dense subset $D$ of $\Sacks(\kappa, F(\kappa))^{V[G_\kappa]}$ if and only if for some $S\subseteq  F(\kappa)$ of size less than $\kappa$ in $V[G_\kappa]$, any $(S,\alpha)$-thinning of $p$ meets $D$.

Let me now argue that $g$ is $M[j(G_\kappa)]$-generic for the poset $j(\R_\kappa)=\Sacks(j(\kappa),j( F(\kappa)))^{M[j(G_\kappa)]}$. Suppose $D$ is a dense subset of $j(\R_\kappa)$ in the model $M[j(G_\kappa)]$. Then by Lemma \ref{lemmaseedpreservation} one can write $D=j(h)(a)$ where $h\in V[G_\kappa]$ is a function from $V_\kappa$ to the collection of dense subsets of $\Sacks(\kappa,$ $F(\kappa))^{V[G_\kappa]}$ and $a\in V_\theta$. Let $\langle D_\beta\mid\beta<\kappa\rangle\in V[G_\kappa]$ enumerate the range of $h$. One may show, as in \cite{FriedmanThompson:PerfectTreesAndElementaryEmbeddings} that any condition $p\in \Sacks(\kappa, F(\kappa))^{V[G_\kappa]}$ can be extended to $q\leq p$ which reduces each $D_\beta$ for $\beta<\kappa$. This implies that the following is a dense subset of $\Sacks(\kappa, F(\kappa))^{V[G_\kappa]}$.
$$D':=\{p\in\R_\kappa\mid \textrm{$p$ reduces each $D_\beta$ for $\beta<\kappa$}\}$$
Thus one may choose a condition $p\in H\cap D'$. By elementarity $j(p)$ reduces each dense subset of $j(\R_\kappa)$ in the range of $j(h)$; in particular, $j(p)$ reduces $D=j(h)(a)$. Thus it follows that there is an $S\subseteq j( F(\kappa))$ of size less than $j(\kappa)$ and an $\alpha<j(\kappa)$ such that any $(S,\alpha)$-thinning of $j(p)$ meets $D$. For each $\xi\in S$ let $\widetilde{q}(\xi)$ be the thinning of $j(p)(\xi)$ obtained by choosing an initial segment of $t_\xi^0$ on the $\alpha$-th splitting level of $j(p)(\xi)$. For $\xi\in j( F(\kappa))\setminus S$ let $\widetilde{q}(\xi):=j(p)(\xi)$. The fact that $\widetilde{q}$ is a condition in $j(\R_\kappa)$ will follow from the next lemma, which appears in \cite{FriedmanThompson:PerfectTreesAndElementaryEmbeddings}.

\begin{lemma}\label{lemmathinned}
For any $\beta<j(\kappa)$ and any subset $S$ of $j( F(\kappa))$ of size at most $j(\kappa)$ in $M[j(G_\kappa)]$, the sequence $\langle t_\xi^0\restrict\beta\mid\xi\in S\rangle$ belongs to $M[j(G_\kappa)]$. 
\end{lemma}

\begin{proof}

Write $\beta=j(f_0)(a)$ where $f_0:V_\kappa\to \kappa$ and $a\in V_\theta$. Let $C=\{\lambda<\kappa\mid\textrm{$f_0"V_\lambda\subseteq\lambda$ and $\lambda$ is a limit cardinal}\}$. By Lemma \ref{lemmaseedpreservation} it follows that $S=j(f)(b)$ where $f:V_\kappa\to[F(\kappa)]^{\leq\kappa}$ and $b\in V_\theta$. Since $S\subseteq j(\bigcup\ran(f))$ it can be assumed without loss of generality that $S=j(\bar{S})$ for some $\bar{S}\in[F(\kappa)]^{\leq\kappa}$. Let $\langle \bar{\alpha}_i\mid i<\kappa\rangle$ be an enumeration of $\bar{S}$. Then $j(\langle \bar{\alpha}_i\mid i<\kappa\rangle)=\langle\alpha_i\mid i<j(\kappa)\rangle$ is an enumeration of $S$. One can easily see that 
$$D=\{\bar{p}\in\Sacks(\kappa, F(\kappa))\mid\textrm{for each $i<\kappa$, $C(\bar{p}(\bar{\alpha}_i))\subseteq C\setminus(i+1)$}\}$$
is a dense subset of $\Sacks(\kappa, F(\kappa))$. Let $\bar{p}\in H_\kappa\cap D$. Then for each $i<j(\kappa)$, $C(j(\bar{p})(\alpha_i))\subseteq C\setminus (i+1)$. Thus, for each $\alpha_i$, the tree $j(\bar{p})(\alpha_i)$ has no splits between $\kappa$ and $\alpha$. If $\kappa\leq i< j(\kappa)$ then $j(\bar{p})(\alpha_i)$ does not split between $0$ and $\alpha$. If $\kappa\leq i <j(\kappa)$ then $t^0_{\alpha_i}\restrict \alpha$ is the unique element of $j(\bar{p})(\alpha_i)$ of length $\alpha$. If $i<\kappa$, then $t^0_{\alpha_i}\restrict\alpha$ is the unique element of $j(\bar{p})(\alpha_i)$ that extends $t^0_{\alpha_i}\restrict\kappa$ and takes on value $0$ at $\kappa$. 
\end{proof}

By Lemma \ref{lemmathinned}, $\widetilde{p}$ is in $M[j(G_\kappa)]$ and is thus a condition in $j(\R_\kappa)$. Furthermore, $\widetilde{p}$ meets $D$ and since $t^0_\xi\subseteq\widetilde{p}(\xi)$ for each $\xi<F(\kappa)$, it follows that $\widetilde{p}$ is in $g$. This establishes that $g$ is $M[j(G_\kappa)]$-generic for $j(\R_\kappa)$. Thus the embedding lifts to $j:V[G_\kappa][H_\kappa]\to M[j(G_\kappa)][j(H_\kappa)]$.

\subsection{Lifting $j$ Through $\Q_{[\kappa^+,\bar{\kappa})}*\P_{[\bar{\kappa},\delta)}$.}

By Lemma \ref{lemmaeastonforsacks}, the poset $\Q_{[\kappa^+,\bar{\kappa})}*\P_{[\bar{\kappa},\delta)}$ is $\leq\kappa$-distributive in $V[G_\kappa][H_\kappa]$. Thus, from Lemma \ref{lemmalambdadist} one sees that $j"H_{[\kappa^+,\bar{\kappa})}*G_{[\bar{\kappa},\delta)}$ generates an $M[j(G_\kappa)]$ $[j(H_\kappa)]$-generic filter for $j(\Q_{[\kappa^+,\bar{\kappa})}*\P_{[\bar{\kappa},\delta)})$, call it $j(H_{[\kappa^+,\bar{\kappa})}*G_{[\bar{\kappa},\delta)})$. Thus $j$ lifts to $j:V[G_\delta]\to M[j(G_\delta)]$ where $j(G_\delta):=j(G_\kappa)*(j(H_\kappa)\times j(H_{[\kappa^{+},\bar{\kappa})}))*j(G_{[\bar{\kappa},\delta)})$.

\subsection{Verifying strongness for $A$}

Let me argue that the lifted embedding $j:V[G_\delta]\to M[j(G_\delta)]$ satisfies $j(A)\cap\mu = A\cap \mu$. This will follow from the next fact. 
\begin{fact}\label{factequation}\ 
\begin{itemize}
\item[$(1)$] $j(\dot{A})\cap V_\theta = \dot{A}\cap V_\theta$
\item[$(2)$] $j(G_\delta)=G_{\gamma_0}*\widetilde{G}_{[\gamma_0,\theta)}*\widetilde{G}_{[\theta,j(\kappa))}$ agrees with $G_\delta$ up to $\mu'$ since $\mu'<\gamma_0$.
\item [$(3)$] $j(u)\restrict \mu' = u\restrict\mu'$
\end{itemize}
\end{fact}
Using the above fact, one has the following.
\begin{align*}
A\cap\mu & = \dot{A}^{G_\delta}\cap\mu \\
 & = (\dot{A}\cap V_{\mu'})^{G_{\mu'}}\cap\mu \tag{\textrm{using the definition of $u$}}\\
 & = (j(\dot{A})\cap V_{\mu'})^{G_{\mu'}}\cap\mu \tag{\textrm{by Fact \ref{factequation}(1)}}\\
 & = j(\dot{A})^{j(G_\delta)}\cap\mu \tag{\textrm{by Fact \ref{factequation}(2) and (3)}}\\
 & = j(A)\cap \mu \\
\end{align*}
This completes the proof of Theorem \ref{theoremwoodin}.
\end{proof}

\section{Conclusion and some open questions}

Cummings and Shelah \cite{CummingsShelah:CardInvariantsAboveContinuum} generalized Easton's Theorem \cite{Easton:PowersOfRegularCardinals}, by forcing to control not only the continuum function $\kappa\mapsto 2^\kappa$ on the regular cardinals, but to control the bounding number $\mathfrak{b}(\kappa)$ and dominating number $\mathfrak{d}(\kappa)$ on regular cardinals. Thus a natural extension of Question \ref{question} is: To what extent can one control the function $\kappa\mapsto (\mathfrak{b}(\kappa),\mathfrak{d}(\kappa),2^\kappa)$ on the regular cardinals by forcing while preserving large cardinals? In particular, to what extent can one control the behavior of $\kappa\mapsto (\mathfrak{b}(\kappa),\mathfrak{d}(\kappa),2^\kappa)$ on the regular cardinals while preserving a Woodin cardinal?

Recall that $\delta$ is a \emph{Shelah cardinal} if for every function $f:\delta\to\delta$ there is a transitive class $N$ and an elementary embedding $j:V\to N$ with critical point $\delta$ such that $V_{j(f)(\delta)}\subseteq N$. Since every Shelah cardinal is measurable (and more), it follows that under the assumption that $\delta$ is a Shelah cardinal, the continuum function has less freedom than under the assumption that $\delta$ is a Woodin cardinal. If $\delta$ is a Shelah cardinal, which Easton functions can one force to agree with the continuum function while preserving the Shelahness of $\delta$?




\end{document}